\DeclareSymbolFont{extraup}{U}{zavm}{m}{n}
\DeclareMathSymbol{\vardiamond}{\mathalpha}{extraup}{87}
\newcommand{\commment}[1]{}
\newcommand{\nomi}{\mathbf{i}}
\newcommand{\nomj}{\mathbf{j}}
\newcommand{\bigamp}{\mathop{\mbox{\Large \&}}}
\renewcommand{\phi}{\varphi}
\renewcommand{\emptyset}{\varnothing}
\newcommand{\Diamondblack}{\vardiamond}
\renewcommand{\epsilon}{\varepsilon}
\theoremstyle{definition}
\newtheorem{theorem}{Theorem}[section]
\newtheorem{lemma}[theorem]{Lemma}
\newtheorem{proposition}[theorem]{Proposition}
\newtheorem{example}[theorem]{Example}
\newtheorem{corollary}[theorem]{Corollary}
\newtheorem{definition}[theorem]{Definition}
\newtheorem{remark}[theorem]{Remark}
\title{Correspondence Theory for Modal Fairtlough-Mendler Semantics of Intuitionistic Modal Logic}
\author{Zhiguang Zhao}
\date{}
\begin{document}
\maketitle

\begin{abstract}
We study the correspondence theory of intuitionistic modal logic in modal Fairtlough-Mendler semantics (modal FM semantics) \cite{FaMe97}, which is the intuitionistic modal version of possibility semantics \cite{Ho16}. We identify the fragment of inductive formulas \cite{GorankoV06} in this language and give the algorithm $\mathsf{ALBA}$ \cite{CoPa12} in this semantic setting. There are two major features in the paper: one is that in the expanded modal language, the nominal variables, which are interpreted as atoms in perfect Boolean algebras, complete join-prime elements in perfect distributive lattices and complete join-irreducible elements in perfect lattices, are interpreted as the refined regular open closures of singletons in the present setting, similar to the possibility semantics for classical normal modal logic \cite{Zh21d}; the other feature is that we do not use conominals or diamond, which restricts the fragment of inductive formulas significantly. We prove the soundness of the $\mathsf{ALBA}$ with respect to modal FM frames and show that the $\mathsf{ALBA}$ succeeds on inductive formulas, similar to existing settings like \cite{CoPa12,Zh21d,Zh22a}.

\emph{Keywords}: possibility semantics, Fairtlough-Mendler semantics, correspondence theory, nucleus, complete Heyting algebra, intuitionistic modal logic
\end{abstract}

\section{Introduction}\label{Sec:Intro}

\paragraph{Possibility Semantics.} Possibility semantics was proposed by Humberstone \cite{Hu81}, which is a generalization of possible world semantics for modal logic, based on partial possibilities instead of complete possible worlds like the one in standard possible world semantics. In recent years there have been a lot of studies in possibility semantics \cite{BeHo20,DiHo20,HT16b,HT16a,Ho14,vBBeHo16}, to name but a few. For a comprehensive study of possibility semantics, see \cite{Ho21,Ho16}.

\paragraph{Intuitionistic Study of Possibility Semantics.} In \cite{BeHo16}, Bezhanishvili and Holliday use the tools of nuclei to study the equivalence between Fairtlough-Mendler semantics (FM semantics for short) \cite{FaMe97}, Dragalin semantics \cite{Dr79,Dr88} and nuclear semantics of intuitionistic logic, which can be regarded as different ways to realize possibility semantics of intuitionistic logic. In \cite{BeHo19}, Bezhanishvili and Holliday study the different semantics of intuitionistic logic, which form a hierarchy. Among these semantics, Dragalin semantics is more like neighbourhood semantics of modal logic \cite{Mo70,Pa17,Sc70} and Beth semantics of intuitionistic logic \cite{Be56}, nuclear semanrics is more like algebraic semantics, and FM semantics is more like relational semantics with two relations. In \cite{Ma16}, Massas provides choice-free representation theorems for distributive lattice, Heyting algebras and co-Heyting algebras. In \cite{Ma22}, Massas studies the B-frame duality, where B-frames can be seen as a generalization of posets, which play an important role in the representation theory of Heyting algebras.

\paragraph{Correspondence Theory for Possibility Semantics.} In \cite{Ya16}, Yamamoto studies Sahlqvist correspondence theory for full possibility frames. In \cite[Theorem 7.20]{Ho16}, Holliday shows that all inductive formulas are filter-canonical, therefore every normal modal logic axiomatized by inductive formulas is sound and complete with respect to its canonical full possibility frame. In \cite{Zh21d}, Zhao shows that inductive formulas have first-order correspondents in full possibility frames as well as in filter-descriptive possibility frames, using algebraic and algorithmic correspondence theory methods \cite{CoGhPa14,CoPa12}. In this setting, the algebraic strcuture of regular open subsets of a given full possibility frame is a complete Boolean algebra with a complete operator, which is not necessarily an atomic Boolean algebra, therefore it is not necessarily perfect.

\paragraph{Our Methodology.} Our aim is to see if Sahlqvist-type correspondence theory could work for semantics whose algebraic counterpart is based on locales, i.e.\ complete Heyting algebras, which are not necessarily perfect (where every element is join-generated by complete join-primes).

We study the correspondence theory of intuitionistic modal logic in the modal version of Fairtlough-Mendler semantics, using algorithmic correspondence theory methods, as explained in \cite{CoGhPa14,CoPa12}. We define the class of inductive formulas for this semantics as well as the Ackermann Lemma Based Algorithm $\mathsf{ALBA}$ which computes the first-order correspondents of inductive formulas. 

Following the methodology of \cite{Zh21d}, our semantic analysis of the modal FM frames is base on their dual algebraic structures, which are complete Heyting algebras with complete operators (not necessarily perfect), where complete join-primes are not always available, in contrast to settings like \cite{CoPa12}. We use the representation of complete Heyting algebras as the refined regular open subsets in Fairtlough-Mendler frames, and we identify one Heyting algebra with an operator (HAO) and one Boolean algebra with an operator (BAO) as the dual algebraic structures of a given modal FM frame $\mathbb{F}$: namely, the HAO $\mathbb{H}_{\mathsf{RO}_{12}}$ of refined regular open subsets of $\mathbb{F}$ and the BAO $\mathbb{B}_{\mathsf{K}}$ of arbitrary subsets of $\mathbb{F}$. Therefore, we can define a natural order-embedding $e:\mathbb{H}_{\mathsf{RO}_{12}}\to\mathbb{B}_{\mathsf{K}}$. It is completely meet-preserving, therefore it has a left adjoint $c:\mathbb{B}_{\mathsf{K}}\to\mathbb{H}_{\mathsf{RO}_{12}}$ sending a subset of the domain of $\mathbb{F}$ to the smallest refined regular open subset containing it. We will use $c$ substantially in the interpretations of the expanded modal language $\mathcal{L}^{+}$ of the algorithm $\mathsf{ALBA}$, which will play an important role in the relational semantics and the refined regular open translation of the expanded modal language.

To summarize, the principles for choosing the interpretations of the expanded modal language are the following:

\begin{itemize}
\item The set of possible interpretations of nominals is join-dense in the complete algebra of possible values of propositional variables;
\item The set of possible interpretations of conominals is meet-dense in the complete algebra of possible values of propositional variables (only in settings when this is possible);
\item The interpretation of black connectives should be the adjoints of modalities in the basic propositional language;
\item These interpretations should be expressible in a first-order way.
\end{itemize}

\paragraph{Structure of the paper.} 
The paper is organized as follows: Section \ref{Sec:Prelim} introduces some relevant structures and notions that will be used in later sections. Section \ref{aSec:Prelim} presents preliminaries on modal Fairtlough-Mendler semantics. Section \ref{Sec:Semantic:Environment} analyzes the semantic environment for the interpretation of the expanded modal language. Section \ref{aSec:expanded:language} introduces the expanded modal language formally as well as the refined regular open translation. Section \ref{asec:Sahlqvist} gives the syntactic definition of inductive formulas. Section \ref{aSec:ALBA} introduced the algorithm $\mathsf{ALBA}$ with an example. Section \ref{aSec:success} gives its success proof on inductive formulas and Section \ref{aSec:soundness} gives its soundness proof. Section \ref{aSec:Discussion} gives the conclusions.

\section{Preliminaries}\label{Sec:Prelim}

In this section, we introduce some topological and relational structures and the notion of nucleus, which will be useful in later sections. For more details, see \cite{BeHo16,Ma16,Ma18}.

\subsection{Topological and Relational Structures}

\begin{definition}[Refined Bitopological Space, Definition 5.1.1 in \cite{Ma16}]
A \emph{refined bitopological space} is a triple $(X,\tau_1,\tau_2)$ where $\tau_1$ and $\tau_2$ are topologies on $X$ and $\tau_1\subseteq\tau_2$.
\end{definition}

\begin{definition}[Refined Birelational Frame]
A \emph{refined birelational frame} is a tuple $(X,\leq_1,\leq_2)$ where $\leq_2\subseteq\leq_1$ are both partial orders on $X$.
\end{definition}

For any poset $(X,\leq)$, define the topology $\tau_{\leq}$ taking all the upsets of $(X,\leq)$ as open sets. Therefore we can identify a partial order $(X,\leq)$ with its corresponding Alexandroff topology $(X,\tau)$. Since $\leq_2\subseteq\leq_1$ iff $\tau_{\leq_1}\subseteq\tau_{\leq_2}$, we can identify refined birelational frames with their corresponding refined bi-Alexandroff spaces.

In what follows, we will call refined birelational frames also Fairtlough-Mendler frames (FM frames for short) \cite{BeHo16,FaMe97}.

\subsection{Nucleus}
In this subsection we introduce the notion of nucleus on Heyting algebras. For more details, see \cite{Jo82,Ma76,Ma81}.

\begin{definition}[Nucleus]
A \emph{nucleus} on a Heyting algebra $\mathbb{H}$ is a map $j:\mathbb{H}\to \mathbb{H}$ satisfying the following conditions:

\begin{itemize}
\item $j(a\wedge b)=j(a)\wedge j(b)$;
\item $a\leq j(a)$. 
\end{itemize}

It is clear that a nucleus on $\mathbb{H}$ is also a closure operator, and the following properties follow from the definition:

\begin{itemize}
\item $j(\top)=\top$;
\item if $a\leq b$, then $j(a)\leq j(b)$;
\item $j(j(a))=j(a)$.
\end{itemize}
We call a nucleus \emph{dense} if $j(\bot)=\bot$.
\end{definition}

\begin{definition}[Nuclear algebra]
A \emph{nuclear algebra} is a pair $(\mathbb{H},j)$ where $\mathbb{H}$ is a Heyting algebra and $j$
is a nucleus on $\mathbb{H}$. It is a \emph{localic nuclear algebra} if $\mathbb{H}$ is a locale, i.e.\ a complete Heyting algebra.
\end{definition}

\begin{theorem}
If $\mathbb{H}=(H,\bot,\top,\land,\lor,\to,j)$ is a nuclear algebra, then $\mathbb{H}_{j}=(H_{j},\bot_{j},\top,\land_{j},\lor_{j},\to_{j})$ is a Heyting algebra where $H_{j}=\{a\in H\mid a=j(a)\}$ and for $a,b\in H_{j}$:

\begin{itemize}
\item $\bot_{j}=j(\bot)$;
\item $a\land_{j}b=a\land b$;
\item $a\lor_{j}b=j(a\lor b)$;
\item $a\to_{j}b=a\to b$.
\end{itemize}
If $H$ is a localic nuclear algebra, then $H_j$ is a locale, where for $Y\subseteq H_{j}$:

\begin{itemize}
\item $\bigwedge_{j}Y=\bigwedge Y$;
\item $\bigvee_{j}Y=j(\bigvee Y)$.
\end{itemize}
\end{theorem}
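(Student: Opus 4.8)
The plan is to verify directly that $\mathbb{H}_j$ as defined satisfies the Heyting algebra axioms, leaning on the facts that $j$ is a closure operator whose image $H_j$ is exactly its set of fixed points, and that $j$ commutes with finite meets. First I would check that $H_j$ is closed under the proposed operations: $\bot_j = j(\bot)$ is a fixed point by idempotence $j(j(a)) = j(a)$; $a \wedge_j b = a \wedge b$ lands in $H_j$ because $j(a \wedge b) = j(a) \wedge j(b) = a \wedge b$ when $a,b \in H_j$; $a \vee_j b = j(a \vee b)$ is a fixed point again by idempotence; and $a \to_j b = a \to b$ is in $H_j$ because in any Heyting algebra $a \to b$ is already ``closed'' under the relevant sense — more precisely one shows $j(a \to b) \leq a \to b$ using $j(a \to b) \wedge a \leq j(a \to b) \wedge j(a) = j((a\to b)\wedge a) \leq j(b) = b$ together with the residuation adjunction, so $j(a\to b) = a \to b$ by the inflationary property.

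Next I would check that $(H_j, \wedge_j, \vee_j, \bot_j, \top)$ is a bounded lattice. The meet $\wedge_j$ is just the ambient meet restricted to $H_j$, so it is automatically a meet in the sub-poset; $\top = j(\top)$ is the top since it is the ambient top. For $\vee_j$: I would show $j(a \vee b)$ is the least upper bound of $a,b$ in $H_j$ — it is an upper bound since $a \leq a \vee b \leq j(a\vee b)$, and if $c \in H_j$ is any upper bound then $a \vee b \leq c$ so $j(a \vee b) \leq j(c) = c$ by monotonicity and the fixed-point property. That $\bot_j = j(\bot)$ is the least element of $H_j$ follows the same way. The absorption and associativity laws then follow from these characterizations as genuine lattice operations.

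For the Heyting implication I would verify the residuation law $a \wedge_j c \leq b \iff c \leq a \to_j b$ for $a,b,c \in H_j$: since $\wedge_j$ and $\to_j$ are the ambient operations and $a \to b \in H_j$, this is just the ambient residuation law restricted to elements of $H_j$, so nothing new is needed. Finally, for the localic claim, I would show that when $H$ is complete, $H_j$ is complete with $\bigwedge_j Y = \bigwedge Y$ (this is a fixed point since $j(\bigwedge Y) = \bigwedge_{y \in Y} j(y) = \bigwedge Y$ using that $j$ preserves arbitrary meets — which itself needs the short argument that a nucleus, being meet-preserving for finite meets and monotone, is actually meet-preserving for all meets: $j(\bigwedge Y) \leq j(y) = y$ for each $y$, hence $j(\bigwedge Y) \leq \bigwedge Y \leq j(\bigwedge Y)$) and $\bigvee_j Y = j(\bigvee Y)$ by the same least-upper-bound argument as for binary joins; the infinite distributive law $a \wedge_j \bigvee_j Y = \bigvee_j \{a \wedge_j y : y \in Y\}$ can be derived from residuation in the usual way, or directly using that $j$ preserves meets and the ambient locale's distributivity.

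The main obstacle is the implication: showing $j(a \to b) \leq a \to b$ so that $\to_j$ is well-defined on $H_j$, and then that it genuinely acts as a relative pseudocomplement there rather than merely inheriting the ambient one's properties on a sub-poset that might have different joins. The subtlety is that $\vee_j \neq \vee$, so one must be careful that $a \to_j b$ residuates $\wedge_j$ and not $\wedge$ — but since $\wedge_j = \wedge$ this turns out to be free, and the only real content is the closure computation for $a \to b$, which is the one place the Heyting structure (residuation) is used essentially rather than just the lattice and closure-operator structure.
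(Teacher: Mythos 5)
Your proof is correct and is the standard argument (closure of $H_j$ under the modified operations, the least-upper-bound characterization of $\lor_j$, inheritance of residuation from $\land_j=\land$, and the frame law from the adjunction); the paper states this theorem as a known preliminary from the nucleus literature and gives no proof of its own, so your route is simply the expected one filled in. One small inaccuracy worth fixing: a nucleus does \emph{not} in general preserve arbitrary meets (the double-negation nucleus $U\mapsto\mathsf{int}(\mathsf{cl}(U))$ on the opens of $\mathbb{R}$ is a counterexample), so your parenthetical claim is false as stated; however, the inequality chain you actually write, $j(\bigwedge Y)\leq j(y)=y$ for each $y\in Y$, already uses $j(y)=y$ and therefore establishes only --- and needs only --- the special case that a meet of fixed points is a fixed point, so the proof itself is unaffected.
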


In a given refined bitopological space $(X,\tau_1,\tau_2)$, we use $\mathsf{I}_{i}$ to denote the interior map in $\tau_i$ and $\mathsf{C}_{i}$ to denote the closure map in $\tau_i$. We use $\mathsf{RO}_{12}(X)$ to denote $\{Y\subseteq X\mid Y=\mathsf{I}_1\mathsf{C}_2(Y)\}$.

\begin{lemma}
Let $\mathbb{F}=(X,\leq_1,\leq_2)$ be an FM-frame and $(X,\tau_1,\tau_2)$ be its corresponding refined bitopological space, let the $\mathbb{H}_{\mathsf{RO}_{12}}$ be the algebra $(\mathsf{RO}_{12}(X)$, $\emptyset$, $H$, $\cap$, $\lor_{\mathsf{RO}_{12}}$, $\to_{\mathsf{RO}_{12}})$ where $Y\lor_{\mathsf{RO}_{12}}Z:=\mathsf{I}_1\mathsf{C}_2(Y\cup Z)$, $Y\to_{\mathsf{RO}_{12}}Z:=\mathsf{I}_1((X-Y)\cup Z)$. Then the operator $\mathsf{I}_1\mathsf{C}_2$ is a nucleus on the complete Heyting algebra of opens in $\tau_1$. Therefore $\mathbb{H}_{\mathsf{RO}_{12}}$ is a complete Heyting algebra.
\end{lemma}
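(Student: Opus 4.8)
The plan is to verify directly that $\mathsf{I}_1\mathsf{C}_2$ restricted to $\tau_1$ (the complete Heyting algebra of $\leq_1$-upsets) satisfies the two defining conditions of a nucleus, and then invoke the preceding theorem on nuclear algebras. For this to make sense, I first need to observe that $\mathsf{I}_1\mathsf{C}_2$ maps $\tau_1$ into $\tau_1$: the interior $\mathsf{I}_1$ of any set is $\tau_1$-open by definition, so $\mathsf{I}_1\mathsf{C}_2(U)\in\tau_1$ for every $U$, and in particular for $U$ open. Thus $\mathsf{I}_1\mathsf{C}_2\colon\tau_1\to\tau_1$ is well-defined, and its set of fixpoints is exactly $\mathsf{RO}_{12}(X)$.

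Next I would check inflationarity: $U\leq\mathsf{I}_1\mathsf{C}_2(U)$ for $U\in\tau_1$. Since $U\subseteq\mathsf{C}_2(U)$ always, and $U$ is $\tau_1$-open, $U$ is contained in the largest $\tau_1$-open subset of $\mathsf{C}_2(U)$, i.e.\ $U\subseteq\mathsf{I}_1\mathsf{C}_2(U)$. For the multiplicativity $\mathsf{I}_1\mathsf{C}_2(U\cap V)=\mathsf{I}_1\mathsf{C}_2(U)\cap\mathsf{I}_1\mathsf{C}_2(V)$, the inclusion $\subseteq$ is immediate from monotonicity of $\mathsf{I}_1$ and $\mathsf{C}_2$. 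For $\supseteq$, the standard argument uses that the ambient space is Alexandroff: in the Alexandroff topology $\tau_2=\tau_{\leq_2}$, the closure operator is $\mathsf{C}_2(Y)={\downarrow_2}Y$, and crucially $\mathsf{C}_2$ distributes over finite intersections when one of the arguments is $\tau_1$-open — more precisely, for $U\in\tau_1$ one shows $U\cap\mathsf{C}_2(Y)\subseteq\mathsf{C}_2(U\cap Y)$ using $\leq_2\subseteq\leq_1$, so that if $x\in U$ and $x\leq_2 y\in Y$ then $x\leq_1 y$, hence $y\in U$ (as $U$ is a $\leq_1$-upset), giving $y\in U\cap Y$ and $x\in{\downarrow_2}(U\cap Y)$. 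Combining this with the dual fact for interiors yields the reverse inclusion. I expect this interaction between the two Alexandroff topologies under the refinement hypothesis $\leq_2\subseteq\leq_1$ to be the main point requiring care; everything else is formal.

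Once $\mathsf{I}_1\mathsf{C}_2$ is shown to be a nucleus $j$ on the locale $\tau_1$, the theorem on nuclear algebras applies with $\mathbb{H}=\tau_1$: the fixpoint algebra $\mathbb{H}_j$ has underlying set $\mathsf{RO}_{12}(X)$, meets computed as intersections, joins computed as $j$ of the $\tau_1$-join (which is $\mathsf{I}_1\mathsf{C}_2$ of the union, since in an Alexandroff space $\tau_1$-joins are unions of upsets, themselves upsets), implication inherited from $\tau_1$ (which is $\mathsf{I}_1((X\setminus Y)\cup Z)$, the Heyting implication in the upset locale), bottom $j(\emptyset)=\mathsf{I}_1\mathsf{C}_2(\emptyset)=\emptyset$, and top the whole space $X$. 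This matches verbatim the operations listed in the statement of $\mathbb{H}_{\mathsf{RO}_{12}}$, and the theorem further guarantees that $\mathbb{H}_j$ is a locale since $\tau_1$ is. Hence $\mathbb{H}_{\mathsf{RO}_{12}}$ is a complete Heyting algebra, as claimed. I would close by noting that the arbitrary meets and joins in $\mathbb{H}_{\mathsf{RO}_{12}}$ are $\bigcap$ and $\mathsf{I}_1\mathsf{C}_2(\bigcup\,\cdot\,)$ respectively, inherited from the theorem.
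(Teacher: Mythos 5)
Your proposal is correct, and it takes the route the paper clearly intends (the paper itself states this lemma without proof, deferring to the cited literature and to the immediately preceding theorem on nuclear algebras): verify that $\mathsf{I}_1\mathsf{C}_2$ is a nucleus on the locale $\tau_1$ of $\leq_1$-upsets and then read off $\mathbb{H}_{\mathsf{RO}_{12}}$ as the fixpoint algebra $(\tau_1)_j$. You correctly isolate the only nontrivial point, namely the inclusion $U\cap\mathsf{C}_2(Y)\subseteq\mathsf{C}_2(U\cap Y)$ for $U\in\tau_1$, which holds because $\tau_1\subseteq\tau_2$ (equivalently $\leq_2\subseteq\leq_1$) makes $U$ a $\tau_2$-open set; applying it twice (to $V$ and then to $\mathsf{I}_1\mathsf{C}_2(U)$) gives the missing half of multiplicativity, and the identification of the fixpoint operations with those listed in the statement is exactly as you describe.
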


\section{Preliminaries on Modal Fairtlough-Mendler semantics}\label{aSec:Prelim}

In the present section we collect the preliminaries on modal Fairtlough-Mendler semantics. For more details, see e.g.,\ \cite{FaMe97,Ma16,Ma18}. The style of presentation follows \cite{CoPa12,Zh21d}.

\subsection{Language}

Let $\mathsf{Prop}$ be the set of propositional variables. The basic modal language $\mathcal{L}$ is defined as follows:
$$\phi::=p\mid\bot\mid\top\mid\phi\land\phi\mid\phi\lor\phi\mid\phi\to\phi\mid\Box\phi,$$ where $p\in\mathsf{Prop}$.

In the algorithm, we will use \emph{inequalities} $\phi\leq\psi$, whose truth in a model is equivalent to the global truth of $\phi\to\psi$ in the model. We will also define \emph{quasi-inequalities} $\phi_1\leq\psi_1\ \&\ \ldots\ \&\ \phi_n\leq\psi_n\Rightarrow\phi\leq\psi$, where $\&$ is the meta-level conjunction and $\Rightarrow$ is the meta-level implication. We say that a formula is \emph{pure} if it does not contain occurrences of propositional variables.

\subsection{Semantics}

For any $R\subseteq W\times W$, we denote $R[X]=\{w\in W\mid (\exists x\in X)Rxw\}$, $R^{-1}[X]=\{w\in W\mid (\exists x\in X)Rwx\}$, $R[w]:=R[\{w\}]$ and $R^{-1}[w]:=R^{-1}[\{w\}]$, respectively.

\begin{definition}[Modal FM frames and models]

A \emph{modal FM frame} is a tuple $\mathbb{F}=(X,\leq_1,\leq_2,R)$, where $(X,\leq_1,\leq_2)$ is an FM frame, $R\subseteq W\times W$ such that $\Box_{\mathsf{RO}_{12}}(X):=\{w\in W\mid R[w]\subseteq X\}\in\mathsf{RO}_{12}(X)$ for any $X\in\mathsf{RO}_{12}(X)$. A \emph{modal FM model} is a pair $\mathbb{M}=(\mathbb{F},V)$ where $V:\mathsf{Prop}\to\mathsf{RO}_{12}(X)$ is a \emph{valuation} on $\mathbb{F}$.
\end{definition}

Given any modal FM model $\mathbb{M}=(X,\leq_1,\leq_2,R,V)$ and any $w\in X$, the \emph{satisfaction relation} is defined as follows:

\begin{center}
\begin{tabular}{l c l}
$\mathbb{M}, w\Vdash p$ & iff & $w\in V(p)$\\
$\mathbb{M}, w\Vdash\bot$ & : & never\\
$\mathbb{M}, w\Vdash\top$ & : & always\\
$\mathbb{M}, w\Vdash \phi\land\psi$ & iff & $\mathbb{M}, w\Vdash \phi$ and $\mathbb{M}, w\Vdash\psi$\\
$\mathbb{M}, w\Vdash \phi\lor\psi$ & iff & $(\forall v\geq_1 w)(\exists u\geq_2 v)(\mathbb{M}, u\Vdash \phi$ or $\mathbb{M}, u\Vdash\psi)$\\
$\mathbb{M}, w\Vdash \phi\to\psi$ & iff & $(\forall v\geq_1 w)(\mathbb{M}, v\Vdash\phi\ \Rightarrow\ \mathbb{M}, v\Vdash\psi)$\\
$\mathbb{M}, w\Vdash \Box\phi$ & iff & $\forall v(Rwv\ \Rightarrow\ \mathbb{M}, v\Vdash\phi)$.\\
\end{tabular}
\end{center}

We use $\llbracket\phi\rrbracket^{\mathbb{M}}=\{w\in X\mid \mathbb{M}, w\Vdash\phi\}$ to denote the \emph{truth set} of $\phi$ in $\mathbb{M}$. We say that $\phi$ is \emph{globally true} on $\mathbb{M}$ (notation: $\mathbb{M}\Vdash\phi$) if $\mathbb{M},w\Vdash\phi$ for all $w\in X$. We say that $\phi$ is \emph{valid} on $\mathbb{F}$ (notation: $\mathbb{F}\Vdash\phi$) if $\phi$ is globally true on $(\mathbb{F},V)$ for all $V$ on $X$. The semantics for inequalities and quasi-inequalities is as follows:
\begin{center}
\begin{tabular}{l c l}
$\mathbb{M}\Vdash \phi\leq\psi$ & iff & (for all $w\in X$)($\mathbb{M}, w\Vdash\phi\ \Rightarrow\ \mathbb{M}, w\Vdash\psi$)\\
$\mathbb{M}\Vdash\bigamp_{i=1}^{n}(\phi_i\leq\psi_i)\Rightarrow\phi\leq\psi$ & iff & ($\mathbb{M}\Vdash\phi_i\leq\psi_i$ for all $i)\ \Rightarrow\ (\mathbb{M}\Vdash\phi\leq\psi$).\\
\end{tabular}
\end{center}
An inequality (resp.\ quasi-inequality) is valid on $\mathbb{F}$ if it is globally true on $(\mathbb{F}, V)$ for all $V$.

\begin{proposition}\label{aprop:inequalities}
For any modal FM model $\mathbb{M}=(X,\leq_1,\leq_2,R,V)$ and any $w\in X$,
\begin{center}
\begin{tabular}{l c l}
$\mathbb{M}\Vdash \phi\to\psi$ & iff & $\llbracket\phi\rrbracket^{\mathbb{M}}\subseteq\llbracket\psi\rrbracket^{\mathbb{M}}$ iff $\mathbb{M}\Vdash \phi\leq\psi$\\
$\mathbb{F}\Vdash\phi\to\psi$ & iff & $\mathbb{F}\Vdash\phi\leq\psi$.\\
\end{tabular}
\end{center}
\end{proposition}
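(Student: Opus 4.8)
The plan is to prove the two biconditionals in Proposition~\ref{aprop:inequalities} by unfolding the definitions of the satisfaction relation and the semantic clauses for inequalities, with the only genuinely semantic input being the clause for $\to$ and the fact that truth sets live in $\mathsf{RO}_{12}(X)$ (a complete Heyting algebra, by the Lemma above).

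For the first line, I would argue $\mathbb{M}\Vdash\phi\to\psi$ iff $\llbracket\phi\rrbracket^{\mathbb{M}}\subseteq\llbracket\psi\rrbracket^{\mathbb{M}}$ as follows. By definition $\mathbb{M}\Vdash\phi\to\psi$ means $\mathbb{M},w\Vdash\phi\to\psi$ for all $w\in X$, i.e.\ for all $w$, $(\forall v\geq_1 w)(\mathbb{M},v\Vdash\phi\Rightarrow\mathbb{M},v\Vdash\psi)$. The right-to-left direction is immediate: if $\llbracket\phi\rrbracket^{\mathbb{M}}\subseteq\llbracket\psi\rrbracket^{\mathbb{M}}$ then whenever $v\geq_1 w$ and $\mathbb{M},v\Vdash\phi$ we have $v\in\llbracket\phi\rrbracket^{\mathbb{M}}\subseteq\llbracket\psi\rrbracket^{\mathbb{M}}$, so $\mathbb{M},v\Vdash\psi$. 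For the left-to-right direction, suppose the global condition holds and let $u\in\llbracket\phi\rrbracket^{\mathbb{M}}$; instantiating the clause at $w:=u$ and $v:=u$ (using reflexivity of $\leq_1$) gives $\mathbb{M},u\Vdash\psi$, so $u\in\llbracket\psi\rrbracket^{\mathbb{M}}$. Then $\llbracket\phi\rrbracket^{\mathbb{M}}\subseteq\llbracket\psi\rrbracket^{\mathbb{M}}$ iff $\mathbb{M}\Vdash\phi\leq\psi$ is literally the definition of the semantics of the inequality $\phi\leq\psi$ (for all $w$, $\mathbb{M},w\Vdash\phi\Rightarrow\mathbb{M},w\Vdash\psi$), so nothing further is needed.

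For the second line, $\mathbb{F}\Vdash\phi\to\psi$ iff $\mathbb{F}\Vdash\phi\leq\psi$ follows by quantifying the first line over all valuations $V$ on $X$: $\mathbb{F}\Vdash\phi\to\psi$ means $(\mathbb{F},V)\Vdash\phi\to\psi$ for all $V$, which by the first part is equivalent to $(\mathbb{F},V)\Vdash\phi\leq\psi$ for all $V$, i.e.\ $\mathbb{F}\Vdash\phi\leq\psi$. This is a purely formal step once the model-level equivalence is in hand.

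I do not anticipate a real obstacle here; the statement is essentially a bookkeeping lemma that reconciles the object-level implication $\to$ with the meta-level inequality $\leq$. The one point deserving a line of care is the use of $\leq_1$-reflexivity in the left-to-right direction of the first biconditional (so that the $\forall v\geq_1 w$ quantifier can be instantiated at $v=w$); this is guaranteed since $\leq_1$ is a partial order in the definition of an FM frame. Everything else is a direct unwinding of the satisfaction clauses, and no appeal to the regular-open structure or to the nucleus $\mathsf{I}_1\mathsf{C}_2$ is actually required for this particular proposition — truth sets being closed under the relevant operations is used only implicitly in that $\llbracket\phi\rrbracket^{\mathbb{M}}$ and $\llbracket\psi\rrbracket^{\mathbb{M}}$ are well-defined subsets of $X$.
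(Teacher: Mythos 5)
Your proof is correct and is exactly the routine unwinding of definitions that the paper intends (the paper in fact states Proposition~\ref{aprop:inequalities} without proof). The only non-trivial point, instantiating $\forall v\geq_1 w$ at $v=w$ via reflexivity of the partial order $\leq_1$, is correctly identified and justified.
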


\section{Semantic Environment of our Setting}\label{Sec:Semantic:Environment}

In this section, we use some algebraic structures to give the semantic definition how the expanded modal language used in the algorithm will be defined. The style of presentation follows \cite{Zh21d}.

\subsection{The Heyting Algebra with Operator $\mathbb{H}_{\mathsf{RO}_{12}}$} \label{aSec:Algebraic:Semantics}

\begin{definition}[Heyting algebra with operator]
A \emph{Heyting algebra with operator} (HAO) is a tuple $\mathbb{H}=(H,\bot,\top,\land,\lor,\to,\Box)$, where the $\Box$-free part is a Heyting algebra and $\Box$ is a unary operation such that $\Box\top=\top$ and $\Box(a\land b)=\Box a\land \Box b$ for any $a,b\in H$. An HAO is \emph{complete} if its Heyting algebra part is complete. An HAO is \emph{completely multiplicative} if $\Box$ preserves all existing meets.
\end{definition}

\begin{definition}\label{Def:dual:BAO}
For any modal FM frame $\mathbb{F}=(X,\leq_1,\leq_2,R)$, let the $\mathbb{H}_{\mathsf{RO}_{12}}=(\mathsf{RO}_{12}(X),\emptyset,H,\cap,\lor_{\mathsf{RO}_{12}},\to_{\mathsf{RO}_{12}},\Box_{\mathsf{RO}_{12}})$ where $Y\lor_{\mathsf{RO}_{12}}Z:=\mathsf{I}_1\mathsf{C}_2(Y\cup Z)$, $Y\to_{\mathsf{RO}_{12}}Z:=\mathsf{I}_1((X-Y)\cup Z)$.
\end{definition}

\begin{proposition}(cf.\ \cite[Theorem 5.6(2)]{Ho16})
For any modal FM frame $\mathbb{F}$, $\mathbb{H}_{\mathsf{RO}_{12}}$ is a complete and completely multiplicative HAO.
\end{proposition}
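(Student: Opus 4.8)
The plan is to verify the three desiderata separately: that $\mathbb{H}_{\mathsf{RO}_{12}}$ is a complete Heyting algebra, that $\Box_{\mathsf{RO}_{12}}$ is a well-defined operation on $\mathsf{RO}_{12}(X)$, and that it preserves all existing meets. The first point is already supplied by the lemma at the end of Section \ref{Sec:Prelim}: the operator $\mathsf{I}_1\mathsf{C}_2$ is a (dense) nucleus on the complete Heyting algebra $\tau_1$ of $\tau_1$-opens, and by the theorem on nuclear algebras its algebra of fixpoints $\mathsf{RO}_{12}(X)$ is again a complete Heyting algebra, with meets computed as intersections and joins computed as $\mathsf{I}_1\mathsf{C}_2$ applied to unions — which is exactly how $\lor_{\mathsf{RO}_{12}}$ and $\to_{\mathsf{RO}_{12}}$ were defined in Definition \ref{Def:dual:BAO}. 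So for this part I would simply cite those two earlier results and note that arbitrary meets and joins exist and have the stated form.

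Next I would check that $\Box_{\mathsf{RO}_{12}}$ is a genuine unary operation on $\mathsf{RO}_{12}(X)$, i.e.\ that $\Box_{\mathsf{RO}_{12}}(Y):=\{w\in X\mid R[w]\subseteq Y\}$ lands in $\mathsf{RO}_{12}(X)$ whenever $Y$ does. But this is precisely the frame condition built into the definition of a modal FM frame (Definition on modal FM frames and models), so again it is immediate. The normality condition $\Box_{\mathsf{RO}_{12}}(X)=X$ is clear since $R[w]\subseteq X$ always holds, and $\Box_{\mathsf{RO}_{12}}(Y\cap Z)=\Box_{\mathsf{RO}_{12}}(Y)\cap\Box_{\mathsf{RO}_{12}}(Z)$ follows from $R[w]\subseteq Y\cap Z \iff R[w]\subseteq Y \text{ and } R[w]\subseteq Z$; these are the two clauses needed for an HAO.

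The substantive claim is complete multiplicativity: for any family $\{Y_i\}_{i\in I}\subseteq\mathsf{RO}_{12}(X)$ we must show $\Box_{\mathsf{RO}_{12}}\bigl(\bigcap_i Y_i\bigr)=\bigcap_i \Box_{\mathsf{RO}_{12}}(Y_i)$. Here I would use that, by the nuclear-algebra theorem, the meet $\bigwedge_i Y_i$ in $\mathbb{H}_{\mathsf{RO}_{12}}$ coincides with the set-theoretic intersection $\bigcap_i Y_i$ (nuclei preserve meets, so the meet of fixpoints is their intersection). Then the computation is the obvious one: $w\in\Box_{\mathsf{RO}_{12}}(\bigcap_i Y_i)$ iff $R[w]\subseteq\bigcap_i Y_i$ iff $R[w]\subseteq Y_i$ for every $i$ iff $w\in\Box_{\mathsf{RO}_{12}}(Y_i)$ for every $i$ iff $w\in\bigcap_i\Box_{\mathsf{RO}_{12}}(Y_i)$, and the latter is the meet $\bigwedge_i\Box_{\mathsf{RO}_{12}}(Y_i)$ in $\mathbb{H}_{\mathsf{RO}_{12}}$ again by the identification of meets with intersections. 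The only thing one has to be a little careful about — and this is where I expect the main (minor) obstacle to lie — is making sure that at every step one is comparing the \emph{correct} meets: the meet in $\tau_1$, the meet in $\mathbb{H}_{\mathsf{RO}_{12}}$, and the raw set intersection must all be reconciled, and this reconciliation rests entirely on the fact that $\mathsf{I}_1\mathsf{C}_2$ is a nucleus (hence meet-preserving) together with $\mathsf{RO}_{12}(X)\subseteq\tau_1$. Once that bookkeeping is in place, the argument is a one-line unfolding of the definition of $R[w]\subseteq(\cdot)$, with no genuine difficulty.
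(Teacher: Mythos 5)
Your proof is correct in substance; note, though, that the paper itself offers no proof of this proposition at all --- it is stated with a bare citation to Holliday's Theorem 5.6(2) --- so your direct verification is genuinely supplying something the paper leaves implicit. Your decomposition (complete Heyting algebra part from the nucleus lemma and the nuclear-algebra theorem; well-definedness of $\Box_{\mathsf{RO}_{12}}$ from the frame condition; normality and finite/infinite multiplicativity by unfolding $R[w]\subseteq(\cdot)$) is the natural one and is consistent with what the paper asserts elsewhere, e.g.\ its remark that arbitrary intersections of refined regular opens are again refined regular open, which it uses to show that $e$ is completely meet-preserving.

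One point of precision in the step you yourself flag as delicate: the reason that $\bigwedge_{\mathsf{RO}_{12}}$ coincides with raw set intersection is not that ``nuclei preserve meets'' --- a nucleus need only preserve \emph{finite} meets, and that is not what is doing the work. What you actually need is the conjunction of two facts: (i) the fixpoints of any nucleus (indeed of any inflationary monotone map) are closed under the meets of the ambient locale, since $j(\bigwedge Y)\leq j(y)=y$ for each $y\in Y$ forces $j(\bigwedge Y)=\bigwedge Y$; and (ii) the ambient locale here is the Alexandroff topology $\tau_1$ of $\leq_1$-upsets, in which arbitrary meets \emph{are} set intersections because arbitrary intersections of upsets are upsets. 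In a general (non-Alexandroff) bitopological space the meet of $\tau_1$-opens would be the $\tau_1$-interior of the intersection and your identification would fail, so (ii) is where the relational nature of FM frames is genuinely used. With that attribution corrected, the rest of your argument goes through as written.
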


The essential difference between the correspondence for Kripke semantics for intuitionistic modal logic and the current setting is that the algebra $\mathbb{H}_{\mathsf{RO}_{12}}$ is not perfect in general, i.e.\ they are not join-generated by completely join-primes.

\subsection{The Auxiliary BAO $\mathbb{B}_{\mathsf{K}}$}

Consider any modal FM-frame $\mathbb{F}=(X,\leq_1,\leq_2,R)$, there is another way of viewing it, namely taking it as a trirelational frame $\mathbb{F}_{3}=(X,\leq_1,\leq_2,R)$, the complex algebra of which is a Boolean algebra with three operators $\mathbb{B}_{\mathsf{K}}$.

The formal definition of the BAO $\mathbb{B}_{\mathsf{K}}$ is given as follows:

\begin{definition}
For any modal FM-frame $\mathbb{F}=(X,\leq_1,\leq_2,R)$, define $\mathbb{B}_{\mathsf{K}}=(P(X),\emptyset,W,\cap, \cup,-,\Box_{\mathsf{K}},\Box_{\leq_1},\Box_{\leq_2})$, where $\cap, \cup, -$ are set-theoretic intersection, union and complementation respectively, $\Box_{\mathsf{K}}(a)=\{w\in W\mid R[w]\subseteq a\}$, and $\Box_{\leq_i}(a)=\{w\in W\mid (\forall v\geq_i w)(v\in a)\}$.
\end{definition}

Since $\mathsf{RO}_{12}(X)\subseteq P(X)$, we have that $e:\mathbb{H}_{\mathsf{RO}_{12}}\hookrightarrow\mathbb{B}_{\mathsf{K}}$ is an order-embedding according to the order of the two algebras. Since in $\mathbb{H}_{\mathsf{RO}_{12}}$, arbitrary intersections of refined regular open subsets are again refined regular open, $e$ is completely meet-preserving. Notice also that $\Box_{\mathsf{RO}_{12}}$ is the restriction of $\Box_{\mathsf{K}}$ to $\mathbb{H}_{\mathsf{RO}_{12}}$. All these observations can be summarized as follows:

\begin{lemma}\label{alemma:preserve:e}
$e:\mathbb{H}_{\mathsf{RO}_{12}}\hookrightarrow\mathbb{B}_{\mathsf{K}}$ is a completely meet-preserving order-embedding such that $e\circ \Box_{\mathsf{RO}_{12}}=\Box_{\mathsf{K}}\circ e$.
\end{lemma}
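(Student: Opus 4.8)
The plan is to verify the three assertions bundled into Lemma~\ref{alemma:preserve:e} one at a time, since each is essentially an unpacking of definitions already available in the excerpt.

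First I would show that $e:\mathbb{H}_{\mathsf{RO}_{12}}\hookrightarrow\mathbb{B}_{\mathsf{K}}$, the inclusion $\mathsf{RO}_{12}(X)\hookrightarrow P(X)$, is an order-embedding. Both algebras carry the subset order, so for $Y,Z\in\mathsf{RO}_{12}(X)$ we have $Y\leq_{\mathbb{H}_{\mathsf{RO}_{12}}}Z$ iff $Y\subseteq Z$ iff $e(Y)\leq_{\mathbb{B}_{\mathsf{K}}}e(Z)$; injectivity is immediate since $e$ is literally the identity on underlying sets. The only minor point worth a sentence is that the order on $\mathbb{H}_{\mathsf{RO}_{12}}$ really is $\subseteq$: this follows because $Y\cap Z = Y$ iff $Y\subseteq Z$, and $\cap$ is the meet in both algebras.

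Next I would argue that $e$ is completely meet-preserving. Let $\{Y_k\}_{k\in K}\subseteq\mathsf{RO}_{12}(X)$. By the preceding lemma (the nucleus lemma in Section~\ref{Sec:Prelim}, together with the theorem on $\mathbb{H}_j$ being a locale), the meet $\bigwedge_k Y_k$ computed in $\mathbb{H}_{\mathsf{RO}_{12}}$ coincides with $\bigcap_k Y_k$, i.e.\ arbitrary intersections of refined regular open sets are again refined regular open. On the $\mathbb{B}_{\mathsf{K}}$ side, the meet of $\{e(Y_k)\}$ is just $\bigcap_k Y_k$ as subsets of $X$. Hence $e(\bigwedge_k Y_k)=\bigcap_k Y_k=\bigwedge_k e(Y_k)$, which is exactly complete meet-preservation. (One should also note the nullary case: $e(\top_{\mathbb{H}_{\mathsf{RO}_{12}}})=e(X)=X=\top_{\mathbb{B}_{\mathsf{K}}}$, which is consistent since $X=\mathsf{I}_1\mathsf{C}_2(X)$.)

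Finally, for the intertwining identity $e\circ\Box_{\mathsf{RO}_{12}}=\Box_{\mathsf{K}}\circ e$, I would observe that $\Box_{\mathsf{RO}_{12}}$ was defined in Definition~\ref{Def:dual:BAO} (via the modal FM frame condition) precisely as $\{w\in X\mid R[w]\subseteq Y\}$ for $Y\in\mathsf{RO}_{12}(X)$ — this is well-defined as a map into $\mathsf{RO}_{12}(X)$ exactly by the frame axiom $\Box_{\mathsf{RO}_{12}}(X)\in\mathsf{RO}_{12}(X)$ — and $\Box_{\mathsf{K}}$ was defined on all of $P(X)$ by the same formula $\Box_{\mathsf{K}}(a)=\{w\in X\mid R[w]\subseteq a\}$. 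So for $Y\in\mathsf{RO}_{12}(X)$, $e(\Box_{\mathsf{RO}_{12}}Y)=\Box_{\mathsf{RO}_{12}}Y=\{w\mid R[w]\subseteq Y\}=\Box_{\mathsf{K}}(Y)=\Box_{\mathsf{K}}(e(Y))$, as claimed; i.e.\ $\Box_{\mathsf{RO}_{12}}$ is literally the restriction of $\Box_{\mathsf{K}}$ to $\mathsf{RO}_{12}(X)$. None of the three parts presents a genuine obstacle; if anything, the part requiring the most care is the second, where one must cite the nucleus machinery of Section~\ref{Sec:Prelim} to justify that meets in $\mathbb{H}_{\mathsf{RO}_{12}}$ are set-theoretic intersections rather than something involving $\mathsf{I}_1\mathsf{C}_2$ — unlike joins, which are genuinely $\mathsf{I}_1\mathsf{C}_2(Y\cup Z)$ and are therefore \emph{not} preserved by $e$, a contrast worth keeping in mind but not needed for the statement.
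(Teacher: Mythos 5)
Your proposal is correct and follows essentially the same route as the paper, which justifies the lemma by exactly the three observations you verify: $e$ is the inclusion $\mathsf{RO}_{12}(X)\subseteq P(X)$ and hence an order-embedding, arbitrary intersections of refined regular open sets are again refined regular open (so meets on both sides are set-theoretic intersection), and $\Box_{\mathsf{RO}_{12}}$ is literally the restriction of $\Box_{\mathsf{K}}$. Your write-up is in fact slightly more careful than the paper's, since you explicitly ground the claim about meets in the nucleus machinery of Section \ref{Sec:Prelim} rather than merely asserting it.
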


However, since $\mathbb{H}_{\mathsf{RO}_{12}}$ and $\mathbb{B}_{\mathsf{K}}$ have different definitions of join, $\mathbb{H}_{\mathsf{RO}_{12}}$ is \emph{not} a sublattice of $\mathbb{B}_{\mathsf{K}}$. 

We have the following corollary by \cite[Proposition 7.34]{DaPr90}):

\begin{corollary}\label{acor:existence:left:adjoint}
$e:\mathbb{H}_{\mathsf{RO}_{12}}\hookrightarrow\mathbb{B}_{\mathsf{K}}$ has a left adjoint $c:\mathbb{B}_{\mathsf{K}}\to\mathbb{H}_{\mathsf{RO}_{12}}$ defined, for every $a\in \mathbb{B}_{\mathsf{K}}$, by
\begin{center}
$c(a)=\bigwedge_{\mathbb{H}_{\mathsf{RO}_{12}}}\{b\in\mathbb{H}_{\mathsf{RO}_{12}}\mid a\leq e(b)\}.$
\end{center}
\end{corollary}

Clearly, $c(Y)=\mathsf{I}_1\mathsf{C}_2(Y)$ for any $\leq_1$-upset $Y\subseteq X$, and $c(Y)=\mathsf{I}_1\mathsf{C}_2(\uparrow_1\hspace{-1mm}Y)$ for any $Y\subseteq W$ where $\uparrow_1\hspace{-1mm}Y$ is the least $\leq_1$-upset containing $Y$. Indeed, by definition $c(X)=\bigwedge_{\mathbb{H}_{\mathsf{RO}_{12}}}\{Y\in\mathsf{RO}_{12}(X)\mid X\leq e(Y)\}=\bigcap\{Y\in\mathsf{RO}_{12}(X)\mid X\subseteq Y\}$, which is the smallest refined regular open set containing $X$. The closure operator $c$ will be called the \emph{refined regular open closure map} and $c(a)$ the \emph{refined regular open closure} of $a$. 

\subsection{Interpreting the Additional Symbols}

\subsubsection{Nominals}

Nominals are originally introduced in hybrid logic (see e.g.\ \cite[Chapter 14]{BeBlWo06}). They are used as special propositional variables and are interpreted as singletons, to refer to a world of the domain. In correspondence theory, nominals are used for computing the minimal valuation of propositional variables so as to eliminate them. 

In the literature, nominals are interpreted as atoms (in complete atomic Boolean algebra based settings), completely join-prime elements (in perfect distributive lattice based settings), completely join-irreducible elements (in perfect (non-distributive) lattice based settings), regular open closures of singletons (in the setting of possibility semantics of classical normal modal logic). The common feature is that the selected class of elements join-generates the relevant complete lattices. 

Therefore, in our setting, we need to find a subset of $\mathbb{H}_{\mathsf{RO}_{12}}$ which join-generates the whole $\mathbb{H}_{\mathsf{RO}_{12}}$. We take $\mathsf{Nom}(\mathbb{H}_{\mathsf{RO}_{12}}):=\{c(\{x\})\mid x\in X\}$ be the set of refined regular open closures of singletons in $\mathbb{B}_{\mathsf{K}}$, which will be shown to be the join-dense set of $\mathbb{H}_{\mathsf{RO}_{12}}$.

\begin{center}
\begin{tabular}{|c|c|}

\hline
\textbf{Propositional base}&\textbf{Nominals/join-generators}\\
\hline
perfect Boolean algebras&atoms\\
\hline
perfect distributive lattices&complete join-primes\\
\hline
perfect general lattices&complete join-irreducibles\\
\hline
possibility semantics&regular open closures of singletons\\
\hline
\end{tabular}
\end{center}

\begin{proposition}\label{aprop:regular:open:join}
For any $Y\in\mathbb{H}_{\mathsf{RO}_{12}}$, $$Y=\bigvee_{\mathsf{RO}_{12}}\{c(\{x\})\mid x\in Y\}=\bigvee_{\mathsf{RO}_{12}}\{Z\in\mathsf{Nom}(\mathbb{H}_{\mathsf{RO}_{12}})\mid Z\subseteq Y\}.$$
\end{proposition}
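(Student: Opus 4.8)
The plan is to prove the claimed equality by establishing the two displayed expressions are each equal to $Y$, working inside the complete Heyting algebra $\mathbb{H}_{\mathsf{RO}_{12}}$. The second equality is essentially bookkeeping: since $x\in Y$ and $Y$ is refined regular open implies $c(\{x\})\subseteq Y$ (because $c$ is a closure operator and $Y=c(Y)$, so $c$ is monotone and $c(\{x\})\subseteq c(Y)=Y$), the family $\{c(\{x\})\mid x\in Y\}$ is contained in $\{Z\in\mathsf{Nom}(\mathbb{H}_{\mathsf{RO}_{12}})\mid Z\subseteq Y\}$; conversely every member of the latter family is of the form $c(\{x\})$ for some $x$, and $c(\{x\})\subseteq Y$ forces $x\in c(\{x\})\subseteq Y$ (using that $x$ belongs to $c(\{x\})$, since $c$ is a closure operator, hence inflationary). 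Thus the two index sets generate the same join, and it suffices to prove $Y=\bigvee_{\mathsf{RO}_{12}}\{c(\{x\})\mid x\in Y\}$.

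For that equality, the $\geq$ direction is immediate from the observation just made: each $c(\{x\})$ with $x\in Y$ is $\subseteq Y$, so their join in $\mathbb{H}_{\mathsf{RO}_{12}}$ is $\subseteq Y$ as well. For the $\leq$ direction, write $Z_0:=\bigvee_{\mathsf{RO}_{12}}\{c(\{x\})\mid x\in Y\}$; by the description of joins in $\mathbb{H}_{\mathsf{RO}_{12}}$ (the nucleus $\mathsf{I}_1\mathsf{C}_2$ applied to the set-theoretic union), we have $Z_0=\mathsf{I}_1\mathsf{C}_2\big(\bigcup_{x\in Y}c(\{x\})\big)$. Since each $x\in Y$ lies in $c(\{x\})$, we get $Y\subseteq\bigcup_{x\in Y}c(\{x\})\subseteq Z_0$ as plain sets, and since $Y$ is an $\mathsf{RO}_{12}$ set this containment of sets, once we know $Z_0\in\mathsf{RO}_{12}(X)$, is exactly the containment $Y\subseteq Z_0$ in $\mathbb{H}_{\mathsf{RO}_{12}}$. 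Combining the two directions yields $Y=Z_0$.

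The one point requiring a little care — and the main obstacle — is making sure the join in $\mathbb{H}_{\mathsf{RO}_{12}}$ is being computed correctly: joins there are \emph{not} set-theoretic unions but are obtained by applying the nucleus $\mathsf{I}_1\mathsf{C}_2$ to the union, as recorded in the Theorem on nuclear algebras and the Lemma identifying $\mathbb{H}_{\mathsf{RO}_{12}}$ as a complete Heyting algebra. Concretely one should argue: $\bigcup_{x\in Y}c(\{x\})$ need not itself be refined regular open, but $Y\subseteq\bigcup_{x\in Y}c(\{x\})$ gives, by monotonicity of the nucleus, $\mathsf{I}_1\mathsf{C}_2(Y)\subseteq\mathsf{I}_1\mathsf{C}_2\big(\bigcup_{x\in Y}c(\{x\})\big)$, and the left side is just $Y$ again since $Y\in\mathsf{RO}_{12}(X)$. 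One also needs $\bigcup_{x\in Y}c(\{x\})\subseteq Y$ implies, again by applying the nucleus, $\mathsf{I}_1\mathsf{C}_2\big(\bigcup_{x\in Y}c(\{x\})\big)\subseteq Y$; both inclusions together pin down the join to be exactly $Y$. So the proof reduces to two facts already available: that $c$ is inflationary and monotone (it is a closure operator, being the left adjoint composed with $e$, per Corollary~\ref{acor:existence:left:adjoint}), and that $\mathsf{I}_1\mathsf{C}_2$ is the nucleus computing joins in $\mathbb{H}_{\mathsf{RO}_{12}}$. No deeper structural input is needed; the subtlety is purely in not conflating $\bigvee_{\mathsf{RO}_{12}}$ with $\bigcup$.
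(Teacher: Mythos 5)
Your proof is correct and follows essentially the same route as the paper's: the containment $\bigvee_{\mathsf{RO}_{12}}\{c(\{x\})\mid x\in Y\}\subseteq Y$ via monotonicity of $c$ and $c(Y)=Y$, and the reverse containment via $x\in c(\{x\})$ for each $x\in Y$. The paper's own proof is just a terser version of this; your extra care in distinguishing $\bigvee_{\mathsf{RO}_{12}}$ from set-theoretic union, and your explicit verification that the two index families coincide, are welcome elaborations rather than a different argument.
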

\begin{proof}
For the first equality, it is easy to see that for any $x\in Y$, $c(\{x\})\subseteq c(Y)=Y$, so $\bigvee_{\mathsf{RO}_{12}}\{c(\{x\})\mid x\in Y\}\subseteq Y$. For the other direction, for any $x\in Y$, $x\in c(\{x\})\subseteq\bigvee_{\mathsf{RO}_{12}}\{c(\{x\})\mid x\in Y\}$. The second equality is easy.
\end{proof}

\subsubsection{Black Diamond}

The black diamond $\Diamondblack$ comes from tense logic, which is the backward looking diamond ``there was a moment where\ldots''. When $\Box$ is interpreted on the relation $R$, $\Diamondblack$ is interpreted on the relation $R^{-1}$. These two connectives has the following property: $\phi\to\Box\psi$ is globally true in a model iff $\Diamondblack\phi\to\psi$ is globally true in that model. This property is called the \emph{adjunction property} algebraically. In correspondence theory, we use the black diamond to compute the minimal valuation of a propositional variable, and the adjunction property is exactly what we need. Therefore, our task here is to find the left adjoint of $\Box_{\mathsf{RO}_{12}}$.

We know from lattice theory that in complete lattices, a map has a left adjoint iff it is completely meet-preserving. Since $\mathbb{H}_{\mathsf{RO}_{12}}$ and $\mathbb{B}_{\mathsf{K}}$ are both complete, and $\Box_{\mathsf{RO}_{12}}:\mathbb{H}_{\mathsf{RO}_{12}}\to\mathbb{H}_{\mathsf{RO}_{12}}$ and $\Box_{\mathsf{K}}:\mathbb{B}_{\mathsf{K}}\to\mathbb{B}_{\mathsf{K}}$ are both completely meet-preserving, therefore they both have left adjoints, which are denoted by $\Diamondblack_{\mathsf{RO}_{12}}$ and $\Diamondblack_{\mathsf{K}}$, respectively. 

\begin{lemma}(Folklore.)\label{alem:Diamond:Full}
$\Diamondblack_{\mathsf{K}}(X)=R[X]$ for any $X\subseteq W$.
\end{lemma}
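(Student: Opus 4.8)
The plan is to verify the adjunction directly from the definitions, rather than computing the left adjoint abstractly and then massaging it. Recall that $\Diamondblack_{\mathsf{K}}$ is characterized by the property that for all $X, Y \subseteq W$, we have $\Diamondblack_{\mathsf{K}}(X) \subseteq Y$ iff $X \subseteq \Box_{\mathsf{K}}(Y)$. So it suffices to show that the concrete map $X \mapsto R[X]$ satisfies this same universal property, and then uniqueness of adjoints gives $\Diamondblack_{\mathsf{K}}(X) = R[X]$. Concretely: $R[X] \subseteq Y$ iff for every $w$ with $(\exists x \in X)\, Rxw$ we have $w \in Y$, iff for every $x \in X$ and every $w$ with $Rxw$ we have $w \in Y$, iff for every $x \in X$ we have $R[x] \subseteq Y$, iff for every $x \in X$ we have $x \in \Box_{\mathsf{K}}(Y)$, iff $X \subseteq \Box_{\mathsf{K}}(Y)$. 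This is just unwinding the definitions of $R[\cdot]$ and $\Box_{\mathsf{K}}$ from the relevant sections.

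First I would state that $\Diamondblack_{\mathsf{K}}$ exists (already established in the paragraph preceding the lemma, since $\Box_{\mathsf{K}}$ is completely meet-preserving on the complete Boolean algebra $\mathbb{B}_{\mathsf{K}}$) and is the unique map satisfying the adjunction law $\Diamondblack_{\mathsf{K}}(X) \leq Y \iff X \leq \Box_{\mathsf{K}}(Y)$, where $\leq$ is $\subseteq$. Then I would carry out the chain of equivalences above to show $R[X] \subseteq Y \iff X \subseteq \Box_{\mathsf{K}}(Y)$ for all $Y$. Finally, I would invoke uniqueness of adjoints: since both $\Diamondblack_{\mathsf{K}}(X)$ and $R[X]$ are the least $Y$ (equivalently, satisfy the same Galois condition against all $Y$), they coincide. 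One can also argue even more directly: $\Diamondblack_{\mathsf{K}}(X) = \bigcap\{Y \mid X \subseteq \Box_{\mathsf{K}}(Y)\} = \bigcap\{Y \mid R[X] \subseteq Y\} = R[X]$, using the chain of equivalences for the middle step.

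There is essentially no obstacle here — the lemma is labelled ``Folklore'' precisely because it is a routine unwinding of the standard fact that the complex-algebra diamond $R[\cdot]$ is left adjoint to the complex-algebra box $\Box_{\mathsf{K}} = \{w \mid R[w] \subseteq \cdot\}$ on a full powerset. The only thing worth being careful about is the direction of the relation: $\Box_{\mathsf{K}}$ looks forward along $R$ (via $R[w]$), so its left adjoint looks backward, and indeed $R[X] = \{w \mid (\exists x \in X)\, Rxw\}$ collects the $R$-successors of points in $X$, which matches the tense-logic reading of $\Diamondblack$ as the backward-looking diamond interpreted on $R^{-1}$ (note $R[X] = R^{-1}[\,\cdot\,]$'s mirror, i.e.\ $w \in R[X]$ iff $X \cap R^{-1}[w] \neq \emptyset$). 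I would include a one-line remark making this bookkeeping explicit so the reader is not confused by the notational conventions fixed in Section~\ref{aSec:Prelim}.

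Thus the proof is short: assert existence/uniqueness of the adjoint, perform the displayed chain of biconditionals (or equivalently the intersection computation), and conclude. I expect the write-up to be three or four lines of prose plus one short displayed equivalence.
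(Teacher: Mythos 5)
Your proof is correct: the chain of equivalences $R[X]\subseteq Y$ iff $X\subseteq\Box_{\mathsf{K}}(Y)$ is exactly the standard verification behind the ``Folklore'' label, and uniqueness of left adjoints then gives $\Diamondblack_{\mathsf{K}}(X)=R[X]$. The paper itself supplies no proof for this lemma, so your write-up simply makes explicit the routine argument the paper is implicitly relying on, with no gaps.
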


\begin{lemma}\label{alem:Diamond:RO}
$\Diamondblack_{\mathsf{RO}_{12}}(Y)=(c\circ\Diamondblack_{\mathsf{K}}\circ e)(Y)$.
\end{lemma}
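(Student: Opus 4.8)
The plan is to show that the two maps $\Diamondblack_{\mathsf{RO}_{12}}$ and $c\circ\Diamondblack_{\mathsf{K}}\circ e$ coincide on $\mathbb{H}_{\mathsf{RO}_{12}}$ by exploiting the uniqueness of left adjoints: since $\Diamondblack_{\mathsf{RO}_{12}}$ is \emph{defined} as the left adjoint of $\Box_{\mathsf{RO}_{12}}:\mathbb{H}_{\mathsf{RO}_{12}}\to\mathbb{H}_{\mathsf{RO}_{12}}$, it suffices to verify that $c\circ\Diamondblack_{\mathsf{K}}\circ e$ is also a left adjoint of $\Box_{\mathsf{RO}_{12}}$, i.e.\ that for all $Y,Z\in\mathbb{H}_{\mathsf{RO}_{12}}$,
\[
(c\circ\Diamondblack_{\mathsf{K}}\circ e)(Y)\subseteq Z \quad\Longleftrightarrow\quad Y\subseteq\Box_{\mathsf{RO}_{12}}(Z).
\]
First I would recall the ambient adjunctions available from the excerpt: $c\dashv e$ (Corollary~\ref{acor:existence:left:adjoint}), $\Diamondblack_{\mathsf{K}}\dashv\Box_{\mathsf{K}}$ (definition of $\Diamondblack_{\mathsf{K}}$ as the left adjoint), and the intertwining identity $e\circ\Box_{\mathsf{RO}_{12}}=\Box_{\mathsf{K}}\circ e$ from Lemma~\ref{alemma:preserve:e}.

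Then the computation is a short chain of adjunction bijections. Starting from $(c\circ\Diamondblack_{\mathsf{K}}\circ e)(Y)\subseteq Z$, apply the adjunction $c\dashv e$ to rewrite this as $(\Diamondblack_{\mathsf{K}}\circ e)(Y)\subseteq e(Z)$; then apply $\Diamondblack_{\mathsf{K}}\dashv\Box_{\mathsf{K}}$ to rewrite it as $e(Y)\subseteq\Box_{\mathsf{K}}(e(Z))$; then use Lemma~\ref{alemma:preserve:e} to replace $\Box_{\mathsf{K}}(e(Z))$ by $e(\Box_{\mathsf{RO}_{12}}(Z))$, obtaining $e(Y)\subseteq e(\Box_{\mathsf{RO}_{12}}(Z))$; and finally use that $e$ is an order-embedding (Lemma~\ref{alemma:preserve:e}) to conclude $Y\subseteq\Box_{\mathsf{RO}_{12}}(Z)$. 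Each step is an ``iff'', so the whole chain establishes the adjunction. By uniqueness of left adjoints in the complete lattice $\mathbb{H}_{\mathsf{RO}_{12}}$, the map $c\circ\Diamondblack_{\mathsf{K}}\circ e$ must equal $\Diamondblack_{\mathsf{RO}_{12}}$, which is the claimed identity.

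I do not anticipate a serious obstacle here; the only points requiring minor care are that one must genuinely use the \emph{order-embedding} property of $e$ (not merely that it is order-preserving) to cancel $e$ in the last step, and that one must be slightly careful about domains and codomains when invoking $\Diamondblack_{\mathsf{K}}\dashv\Box_{\mathsf{K}}$ — both $e(Y)$ and $e(Z)$ live in $\mathbb{B}_{\mathsf{K}}$, so the adjunction is applied entirely inside $\mathbb{B}_{\mathsf{K}}$, which is legitimate. Alternatively, one could give a direct proof showing $c\circ\Diamondblack_{\mathsf{K}}\circ e$ is completely join-preserving and then identifying its right adjoint with $\Box_{\mathsf{RO}_{12}}$ via the same intertwining identity, but the adjunction-chain argument above is the cleanest and avoids any explicit join computations.
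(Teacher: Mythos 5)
Your proof is correct and uses exactly the same ingredients as the paper's: the adjunction $c\dashv e$ from Corollary~\ref{acor:existence:left:adjoint}, the adjunction $\Diamondblack_{\mathsf{K}}\dashv\Box_{\mathsf{K}}$, and the intertwining identity $e\circ\Box_{\mathsf{RO}_{12}}=\Box_{\mathsf{K}}\circ e$ together with the order-embedding property of $e$ from Lemma~\ref{alemma:preserve:e}. The paper packages the same chain of equivalences as a computation of the meet $\bigwedge_{\mathsf{RO}_{12}}\{Z\mid Y\leq\Box_{\mathsf{RO}_{12}}(Z)\}$ rather than invoking uniqueness of left adjoints, but this is only a cosmetic difference.
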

\begin{proof}
We have the following chain of equalities:
\begin{center}
\begin{tabular}{r c l l}
$\Diamondblack_{\mathsf{RO}_{12}}(Y)$ & = & $\bigwedge_{\mathsf{RO}_{12}}\{Z\in\mathbb{H}_{\mathsf{RO}_{12}}\mid Y\leq\Box_{\mathsf{RO}_{12}}(Z)\}$ &\\
& = &$\bigwedge_{\mathsf{RO}_{12}}\{Z\in\mathbb{H}_{\mathsf{RO}_{12}}\mid e(Y)\leq(\Box_{\mathsf{K}}\circ e)(Z)\}$ & (Lemma \ref{alemma:preserve:e})\\
& = &$\bigwedge_{\mathsf{RO}_{12}}\{Z\in\mathbb{H}_{\mathsf{RO}_{12}}\mid (\Diamondblack_{\mathsf{K}}\circ e)(Y)\leq e(Z)\}$ &\\
& = &$\bigwedge_{\mathsf{RO}_{12}}\{Z\in\mathbb{H}_{\mathsf{RO}_{12}}\mid (c\circ\Diamondblack_{\mathsf{K}}\circ e)(Y)\leq Z\}$ &\\
& = &$(c\circ\Diamondblack_{\mathsf{K}}\circ e)(Y)$\\
& = &$(c\circ\Diamondblack_{\mathsf{K}})(Y)$ &\\
& = &$c(R[Y])$ & (Lemma \ref{alem:Diamond:Full})\\
& = &$\mathsf{I}_1\mathsf{C}_2(\uparrow_1\hspace{-1mm}R[Y])$. & (Corollary \ref{acor:existence:left:adjoint})
\end{tabular}
\end{center}
\end{proof}

\section{Preliminaries on Algorithmic Correspondence}\label{aSec:expanded:language}

In the present section, we collect preliminaries on algorithmic correspondence for modal FM semantics. We will define the expanded modal language $\mathcal{L}^{+}$ for $\mathsf{ALBA}$, the first-order correspondence language $\mathcal{L}^{1}$ and the refined regular open translation of $\mathcal{L}^{+}$ into $\mathcal{L}^{1}$. Our treatment is similar to \cite{CoPa12,Zh21d}.

\subsection{The Expanded Modal Language $\mathcal{L}^{+}$}\label{asec:expanded:modal:language}

The expanded modal language $\mathcal{L}^{+}$ contains the basic modal language $\mathcal{L}$ and a set $\mathsf{Nom}$ of \emph{nominals}, to be interpreted as elements in $\mathsf{Nom}(\mathbb{H}_{\mathsf{RO}_{12}})$, and the \emph{black connective} $\Diamondblack$, to be interpreted as the left adjoint of $\Box$. Its definition is given as follows:
$$\phi::=p\mid\bot\mid\top\mid\nomi\mid\phi\land\phi\mid\phi\lor\phi\mid\phi\to\phi\mid\Box\phi\mid\Diamondblack\phi,$$
where $p\in\mathsf{Prop}$ and $\nomi\in\mathsf{Nom}$. 

We extend the valuation $V$ to nominals such that $V(\nomi)\in\mathsf{Nom}(\mathbb{H}_{\mathsf{RO}_{12}})$ and use $i$ to denote the element that $V(\nomi)=\mathsf{I}_1\mathsf{C}_2(\uparrow_1\hspace{-1mm}\{i\})$.

The satisfaction relation for the additional symbols is given as follows:
\begin{definition}
In any modal FM model $\mathbb{M}=(X,\leq_1,\leq_2,R,V)$,
\begin{center}
\begin{tabular}{l c l}
$\mathbb{M}, w\Vdash\nomi$ & iff & $(\forall v\geq_1 w)(\exists u\geq_2 v)(i\leq_1 u)$.\\
$\mathbb{M}, w\Vdash \Diamondblack\phi$ & iff & $(\forall v\geq_1 w)(\exists u\geq_2 v)(\exists t\leq_1 u)\exists s(Rst$ and $\mathbb{M}, s\Vdash \phi)$.\\
\end{tabular}
\end{center}
\end{definition}

Truth set and validity are defined similarly to the basic modal language.

\subsection{The Correspondence Languages $\mathcal{L}^1$}\label{asec:correspondence:language}

In order to express the first-order correspondents of modal formulas, we need to define the first-order correspondence language $\mathcal{L}^1$.

\paragraph{Syntax.}
The non-logical symbols of the first-order correspondence language $\mathcal{L}^{1}$ consists of the following:
\begin{itemize}
\item a set of unary predicate symbols $P_n$, each of which corresponds to a propositional variable $p_n$ and is going to be interpreted as a refined regular open subset of the domain;
\item three binary relation symbols $\leq_1,\leq_2$ and $R$ corresponding to the relations denoted with the same symbol;
\item a set of individual symbols $i_n$, each of which corresponds to a nominal $\nomi_n$. Notice that we allow the individual symbols $i_n$ to be quantified by quantifiers $\forall i_n,\exists i_n$.
\end{itemize}

\subsection{The Refined Regular Open Translation}\label{asec:regular:open:translation}

In the present section, we will define the \emph{refined regular open translation} of $\mathcal{L}^{+}$ into $\mathcal{L}^{1}$.

\begin{definition}[Syntactic Refined Regular Open Closure]
Given a first-order formula $\alpha(x)$ with at most $x$ free, the \emph{syntactic regular open closure} $\mathsf{RO}^{12}_{x}(\alpha(x))$ is defined as $(\forall y\geq_1 x)(\exists z\geq_2 y)(\exists z'\leq_1 z)\alpha(z')$.
\end{definition}

\begin{definition}[Refined Regular Open Translation]
The regular open translation of a formula in $\mathcal{L}^{+}$ into $\mathcal{L}^{1}$ is given as follows:

\begin{center}
\begin{tabular}{r c l}
$ST_x(p_{i})$ & := & $P_{i}x$;\\
$ST_x(\bot)$ & := & $x\neq x$;\\
$ST_x(\top)$ & := & $x=x$;\\
$ST_x(\nomi)$ & := & $\mathsf{RO}^{12}_{x}(i=x)$;\\
$ST_x(\phi_1\land\phi_2)$ & := & $ST_x(\phi_1)\land ST_x(\phi_2)$;\\
$ST_x(\phi_1\lor\phi_2)$ & := & $(\forall y\geq_1 x)(\exists z\geq_2 y)(ST_z(\phi_1)\lor ST_z(\phi_2))$;\\
$ST_x(\phi_1\to\phi_2)$ & := & $(\forall y\geq_1 x)(ST_y(\phi_1)\to ST_y(\phi_2))$;\\
$ST_x(\Box\phi)$ & := & $\forall y(Rxy\to ST_y(\phi))$;\\
$ST_x(\Diamondblack\phi)$ & := & $\mathsf{RO}^{12}_{x}(\exists y(Ryx\land ST_y(\phi)))$.\\
\end{tabular}
\end{center}
\end{definition}

The refined regular open translations of inequalities $\phi\leq\psi$ and quasi-inequalities are given as follows (notice that they are interpreted on the level of global truth of models):\\

$ST(\phi\leq\psi):=\forall x(ST_x(\phi)\to ST_x(\psi))$;

$ST(\bigamp_{j=1}^{n}(\phi_j\leq\psi_j)\Rightarrow\phi\leq\psi):=\bigwedge_{j=1}^{n}\forall x(ST_x(\phi_j)\to ST_x(\psi_j))\to\forall x(ST_x(\phi)\to ST_x(\psi)).$\\

The following proposition justifies the translation defined above:

\begin{proposition}\label{aprop:translation}
For any modal FM model $\mathbb{M}=(X,\leq_1,\leq_2,R,V)$, any $w\in X$, any $\mathcal{L}^{+}$-formula $\phi$, any $\mathcal{L}^{+}$-inequality $\mathsf{Ineq}$, any $\mathcal{L}^{+}$-quasi-inequality $\mathsf{Quasi}$,
\begin{itemize}
\item $\mathbb{M},w\Vdash\phi\mbox{ iff }\mathbb{M}\models ST_x(\phi)[w];$
\item $\mathbb{M}\Vdash\phi\mbox{ iff }\mathbb{M}\models \forall xST_x(\phi);$
\item $\mathbb{M}\Vdash\mathsf{Ineq}\mbox{ iff }\mathbb{M}\models ST(\mathsf{Ineq});$
\item $\mathbb{M}\Vdash\mathsf{Quasi}\mbox{ iff }\mathbb{M}\models ST(\mathsf{Quasi}).$
\end{itemize}
\end{proposition}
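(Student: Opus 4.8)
The plan is to prove the first item by induction on the complexity of the $\mathcal{L}^{+}$-formula $\phi$, since the remaining three items follow immediately from it: the second is the first quantified over all $w \in X$ together with the definition of global truth, the third unfolds $\mathbb{M}\Vdash\phi\leq\psi$ as ``for all $w$, $\mathbb{M},w\Vdash\phi$ implies $\mathbb{M},w\Vdash\psi$'' and then applies the first item to each side, and the fourth does the same with the meta-conjunction and meta-implication matching the Boolean $\bigwedge$ and $\to$ in $ST(\mathsf{Quasi})$. So the real content is the inductive claim $\mathbb{M},w\Vdash\phi$ iff $\mathbb{M}\models ST_x(\phi)[w]$.

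For the base cases: $p_i$, $\bot$, $\top$ are immediate from the definitions of the satisfaction relation and of $ST_x$. For the nominal $\nomi$, I would unfold $ST_x(\nomi) = \mathsf{RO}^{12}_x(i=x) = (\forall y\geq_1 x)(\exists z\geq_2 y)(\exists z'\leq_1 z)(i = z')$, and observe that the innermost $(\exists z' \leq_1 z)(i = z')$ is equivalent to $i \leq_1 z$; this matches exactly the clause $\mathbb{M},w\Vdash\nomi$ iff $(\forall v\geq_1 w)(\exists u\geq_2 v)(i\leq_1 u)$, so the two are literally the same first-order statement. For the inductive steps, $\land$, $\lor$, $\to$, and $\Box$ are routine: in each case the clause defining $\mathbb{M},w\Vdash\phi$ and the clause defining $ST_x(\phi)$ have the same shape, and one simply substitutes the induction hypothesis at each subformula occurrence (for $\lor$ and $\to$ the induction hypothesis is applied at the shifted point $z$ resp.\ $y$, which is exactly how the satisfaction clauses quantify). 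The $\Diamondblack$ case is the one to do carefully: $ST_x(\Diamondblack\phi) = \mathsf{RO}^{12}_x(\exists y(Ryx\land ST_y(\phi))) = (\forall v\geq_1 x)(\exists u\geq_2 v)(\exists t\leq_1 u)\exists s(Rst \land ST_s(\phi))$, and by the induction hypothesis $ST_s(\phi)$ holds at $s$ iff $\mathbb{M},s\Vdash\phi$, which is precisely the satisfaction clause $\mathbb{M},w\Vdash\Diamondblack\phi$ iff $(\forall v\geq_1 w)(\exists u\geq_2 v)(\exists t\leq_1 u)\exists s(Rst$ and $\mathbb{M},s\Vdash\phi)$.

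I expect the induction itself to present no genuine obstacle, since by design the first-order translation clauses mirror the satisfaction clauses; the only place where one must be slightly attentive is to make sure that in the $\nomi$ and $\Diamondblack$ cases the syntactic regular open closure operator $\mathsf{RO}^{12}_x(\cdot)$ has been correctly unfolded to match the semantic clauses verbatim, and that the bound-variable renaming inside $\mathsf{RO}^{12}_x$ does not clash with free variables of the argument formula (a standard precaution, harmless here because $ST_y(\phi)$ introduces only fresh bound variables). If one wanted to be fully rigorous about the well-definedness of these clauses for formulas with repeated or nested modalities, one would note that $ST_x$ is defined by structural recursion with a fresh choice of bound variables at each step, so no capture occurs; this is the sort of bookkeeping that is conventionally left implicit. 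The soundness of the translation of inequalities and quasi-inequalities then follows purely formally, as indicated above, using Proposition \ref{aprop:inequalities} to pass between $\phi\to\psi$ and $\phi\leq\psi$ where convenient.
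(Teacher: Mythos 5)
Your proof is correct, and it is the standard structural induction one would expect here: each clause of $ST_x$ is designed to mirror the corresponding satisfaction clause, with the only points requiring care being the unfolding of $\mathsf{RO}^{12}_x(\cdot)$ in the $\nomi$ and $\Diamondblack$ cases (where $(\exists z'\leq_1 z)(i=z')$ collapses to $i\leq_1 z$) and the routine freshness of bound variables, both of which you address. The paper itself states Proposition \ref{aprop:translation} without proof, treating it as routine, so there is no authorial argument to compare against; your write-up supplies exactly the argument that is being left implicit.
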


\section{Inductive formulas}\label{asec:Sahlqvist}

In this section, we define inductive formulas for our setting. The definition is similar to \cite{Zh22a}.

We define \emph{positive formulas} with variables in $A\subseteq\mathsf{Prop}$ as follows:
$$\mathsf{POS}_{A}::=p\mid\bot\mid\top\mid\Box\mathsf{POS}_{A}\mid\mathsf{POS}_{A}\land\mathsf{POS}_{A}\mid\mathsf{POS}_{A}\lor\mathsf{POS}_{A}$$
where $p\in A$.

We define the \emph{dependence order} on propositional variables as any irreflexive and transitive binary relation $<_\Omega$ on them. 

We define the \emph{PIA formulas}\footnote{For the name, see e.g.\ \cite[Remark 3.24]{PaSoZh16}.} with main variable $p$ as follows:
$$\mathsf{PIA}_{p}::=p\mid\bot\mid\top\mid\Box\mathsf{PIA}_{p}\mid\mathsf{POS}_{A_{p}}\to\mathsf{PIA}_{p}$$

where $A_{p}=\{q\in \mathsf{Prop}\mid q<_{\Omega}p\}$. 

We define the \emph{inductive antecedent} as follows:
$$\mathsf{Ant}::=\mathsf{PIA}_{p}\mid\mathsf{Ant}\land\mathsf{Ant}\mid\mathsf{Ant}\lor\mathsf{Ant}$$

where $p\in\mathsf{Prop}$. 

We define the \emph{inductive succedent} as follows:
$$\mathsf{Suc}::=\mathsf{POS}_{\mathsf{Prop}}\mid\mathsf{PIA}_{q}\to\mathsf{Suc}\mid\Box\mathsf{Suc}\mid\mathsf{Suc}\land\mathsf{Suc}$$

where $q\in\mathsf{Prop}$.

Finally, an \emph{$\Omega$-inductive formula} is a formula of the form $\mathsf{Ant}\to\mathsf{Suc}$. An \emph{inductive formula} is an $\Omega$-inductive formula for some $<_{\Omega}$.

\begin{remark}
Since in our settings, we have not yet found meet-dense set of $\mathbb{H}_{\mathsf{RO}_{12}}$, and we will only compute minimal valuations rather than maximal valuations, we do not use the order-type and signed generation tree style definitions like in \cite{CoPa12}. In addition, since we do not have diamond in the basic language, the fragment that we have is much smaller than typical definitions of inductive formulas in some existing settings like \cite{CoPa12}.
\end{remark}

\section{The Algorithm $\mathsf{ALBA}$}\label{aSec:ALBA}

In the present section, we define the algorithm $\mathsf{ALBA}$ which compute the first-order correspondence of the input formula, in the style of \cite{CoPa12,Zh22a}. 

The algorithm $\mathsf{ALBA}$ executes in three stages. $\mathsf{ALBA}$ receives a formula $\mathsf{Ant}\to\mathsf{Suc}$ as input and transforms it into the inequality $\mathsf{Ant}\leq\mathsf{Suc}$.

\begin{enumerate}

\item \textbf{Preprocessing and First approximation}:

\begin{enumerate}
\item We apply the following \emph{distribution rules} exhaustively:

\begin{itemize}
\item In $\mathsf{Ant}$, rewrite every subformula of the former form into the latter form:

\begin{itemize}
\item $(\beta\lor\gamma)\land\alpha$, $(\beta\land\alpha)\lor(\gamma\land\alpha)$
\item $\alpha\land(\beta\lor\gamma)$, $(\alpha\land\beta)\lor(\alpha\land\gamma)$
\end{itemize}
\item In $\mathsf{Suc}$, rewrite every subformula of the former form into the latter form:
\begin{itemize}
\item $(\beta\land\gamma)\lor\alpha$, $(\beta\lor\alpha)\land(\gamma\lor\alpha)$
\item $\alpha\lor(\beta\land\gamma)$, $(\alpha\lor\beta)\land(\alpha\lor\gamma)$
\item $\alpha\to\beta\land\gamma$, $(\alpha\to\beta)\land(\alpha\to\gamma)$
\item $\Box(\alpha\land\beta)$, $\Box\alpha\land\Box\beta$
\end{itemize}
\end{itemize}

\item Apply the \emph{splitting rules}:

$$\infer{\alpha\leq\beta\ \ \ \alpha\leq\gamma}{\alpha\leq\beta\land\gamma}
\qquad
\infer{\alpha\leq\gamma\ \ \ \beta\leq\gamma}{\alpha\lor\beta\leq\gamma}
$$

\end{enumerate}

Now for each obtained inequality $\phi_i\leq\psi_i$, We apply the \emph{first-approximation rule}:
$$\infer{\nomi_0\leq\phi_i\ \Rightarrow\ \nomi_0\leq\psi_i}{\phi_i\leq\psi_i}$$

Now we call each quasi-inequality $\nomi_0\leq\phi_i\ \Rightarrow\ \nomi_0\leq\psi_i$ a \emph{system}, and use $\mathsf{S}$ to denote a meta-conjunction of inequalities. When $\mathsf{S}$ is empty, we denote it as $\emptyset$.

\item \textbf{The reduction-elimination cycle}:

In this stage, for each system $\nomi_0\leq\phi_i\ \Rightarrow\ \nomi_0\leq\psi_i$, we apply the following rules to eliminate all the propositional variables:
\begin{enumerate}
\item \emph{Splitting rules}:
$$\infer{(\mathsf{S}\ \Rightarrow\ \alpha\leq\beta)\ \ \ (\mathsf{S}\ \Rightarrow\ \alpha\leq\gamma)}{\mathsf{S}\ \Rightarrow\ \alpha\leq\beta\land\gamma}$$
$$\infer{\mathsf{S}\ \&\ \alpha\leq\beta\ \&\ \alpha\leq\gamma\ \Rightarrow\ \phi\leq\psi}{\mathsf{S}\ \&\ \alpha\leq\beta\land\gamma\ \Rightarrow\ \phi\leq\psi}$$
$$\infer{(\mathsf{S}\ \Rightarrow\ \alpha\leq\gamma)\ \ \ (\mathsf{S}\ \Rightarrow\ \beta\leq\gamma)}{\mathsf{S}\ \Rightarrow\ \alpha\lor\beta\leq\gamma}$$
$$\infer{\mathsf{S}\ \&\ \alpha\leq\gamma\ \&\ \beta\leq\gamma\ \Rightarrow\ \phi\leq\psi}{\mathsf{S}\ \&\ \alpha\lor\beta\leq\gamma\ \Rightarrow\ \phi\leq\psi}$$

\item \emph{Residuation rules}:\label{bPage:Residuation:Rules}
$$\infer{\mathsf{S}\ \Rightarrow\ \Diamondblack\alpha\leq\beta}{\mathsf{S}\ \Rightarrow\ \alpha\leq\Box\beta}$$
$$\infer{\mathsf{S}\ \&\ \Diamondblack\alpha\leq\beta\ \Rightarrow\ \phi\leq\psi}{\mathsf{S}\ \&\ \alpha\leq\Box\beta\ \Rightarrow\ \phi\leq\psi}$$
$$\infer{\mathsf{S}\ \Rightarrow\ \alpha\land\beta\leq\gamma}{\mathsf{S}\ \Rightarrow\ \alpha\leq\beta\to\gamma}$$
$$\infer{\mathsf{S}\ \&\ \alpha\land\beta\leq\gamma\ \Rightarrow\ \phi\leq\psi}{\mathsf{S}\ \&\ \alpha\leq\beta\to\gamma\ \Rightarrow\ \phi\leq\psi}$$

\item \emph{Approximation rule}:
$$\infer{\mathsf{S}\ \&\ \nomi\leq\phi\ \Rightarrow\ \nomi\leq\psi}{\mathsf{S}\ \Rightarrow\ \phi\leq\psi}$$

The nominal introduced by the approximation rule must not occur in the system before applying the rule.

\item \emph{Deleting rules}:

$$\infer{\mathsf{S}\ \Rightarrow\ \phi\leq\psi}{\mathsf{S}\ \&\ \alpha\leq\top\ \Rightarrow\ \phi\leq\psi}$$
$$\infer{\emptyset\ \Rightarrow\ \alpha\leq\top}{\mathsf{S}\ \Rightarrow\ \alpha\leq\top}$$

\item \emph{Right-handed Ackermann rule}. This rule eliminates propositional variables and is the core of the algorithm:
$$\infer{\bigamp_{j=1}^{m}\eta_j(\theta/p)\leq\iota_j(\theta/p)\ \Rightarrow\ \phi(\theta/p)\leq\psi(\theta/p)}{\bigamp_{i=1}^{n}\theta_i\leq p\ \&\ \bigamp_{j=1}^{m}\eta_j\leq\iota_j\ \Rightarrow\ \phi\leq\psi}$$

where:
\begin{enumerate}
\item $p$ does not occur in $\theta_1, \ldots, \theta_n$;
\item Each $\eta_i,\psi$ is positive, and each $\iota_i,\phi$ negative in $p$, for $1\leq i\leq m$;
\item $\theta:=\theta_1\lor\ldots\lor\theta_n$.
\end{enumerate}
\end{enumerate}

\item \textbf{Output}: If in Stage 2, the algorithm gets stuck for some systems, i.e.\ some propositional variables cannot be eliminated, then the algorithm stops and output ``failure''. Otherwise, each initial system after the first approximation rule has been reduced to a set of pure quasi-inequalities Reduce$(\nomi_0\leq\phi_i\ \Rightarrow\ \nomi_0\leq\psi_i)$, and then the output is a set of pure quasi-inequalities $\bigcup_{i\in I}$Reduce$(\nomi_0\leq\phi_i\ \Rightarrow\ \nomi_0\leq\psi_i)$. Then we can use the conjunction of the refined regular open translations of the quasi-inequalities to obtain the first-order correspondence (notice that in the standard translation of each quasi-inequality, we need to universally quantify over all the individual variables).
\end{enumerate}

\begin{example}
Here we give an example. For the sake of clarity we add propositional quantifiers and nominal quantifiers before the quasi-inequality.

$\ $\\
$\forall p(\Box p\to p)$\\
$\forall p(\Box p\leq p)$\\
$\forall p\forall\nomi(\nomi\leq\Box p\ \Rightarrow \nomi\leq p)$\\
$\forall p\forall\nomi(\Diamondblack\nomi\leq p\ \Rightarrow \nomi\leq p)$\\
$\forall\nomi(\nomi\leq\Diamondblack\nomi)$\\
$\forall i\forall x(ST_{x}(\nomi)\to ST_{x}(\Diamondblack\nomi))$\\
$\forall i\forall x(\mathsf{RO}^{12}_{x}(i=x)\to \mathsf{RO}^{12}_{x}(\exists y(Ryx\land \mathsf{RO}^{12}_{x}(i=y))))$
\end{example}

\section{Success}\label{aSec:success}

In the present section, we show the success of $\mathsf{ALBA}$ on any inductive formula $\phi\to\psi$. 

\begin{theorem}
$\mathsf{ALBA}$ succeeds on any inductive formula $\phi\to\psi$ and outputs a set of pure quasi-inequalities and a first-order formula.
\end{theorem}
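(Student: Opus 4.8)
The plan is to follow the standard ALBA success-proof template (as in \cite{CoPa12,Zh22a}), adapted to the restricted fragment and the absence of conominals and diamond. Recall that the algorithm receives $\mathsf{Ant}\to\mathsf{Suc}$, turns it into $\mathsf{Ant}\leq\mathsf{Suc}$, applies preprocessing and first approximation to obtain a finite set of systems of the form $\nomi_0\leq\phi_i\ \&\ \mathsf{S}\ \Rightarrow\ \nomi_0\leq\psi_i$, and must run the reduction-elimination cycle until every propositional variable is gone. So the target is: for each such system, the rules suffice to eliminate all propositional variables. I would first argue that preprocessing and first approximation leave us, for each $i$, with a system whose left-hand inequality has shape $\nomi_0\leq\mathsf{PIA}_{p}$-conjunct (after splitting the $\lor$'s out of $\mathsf{Ant}$ and the $\land$'s out of $\mathsf{Suc}$, using the distribution rules in 1(a)), and whose right-hand inequality is $\nomi_0\leq\mathsf{Suc}'$ for a succedent $\mathsf{Suc}'$ built from positive formulas, $\Box$, $\land$, and $\mathsf{PIA}_q\to(-)$.

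Second, I would track the two halves separately. For the right-hand inequality $\nomi_0\leq\mathsf{Suc}'$: by the splitting rule for $\land$ on the right and the residuation rule turning $\alpha\leq\beta\to\gamma$ into $\alpha\land\beta\leq\gamma$ together with the residuation rule for $\Box$ (which moves a $\Box$ on the right to a $\Diamondblack$ on the left), one peels $\mathsf{Suc}'$ down until the right-hand side is a positive formula $\mathsf{POS}_{\mathsf{Prop}}$ and the accumulated antecedents added to $\mathsf{S}$ are inequalities of the form $\theta\leq q$-type or $\nomi$-prefixed PIA inequalities; crucially each $\mathsf{PIA}_q\to(-)$ peeled off contributes a $\mathsf{PIA}_q$ to the left side of an inequality, which is then itself processed like an antecedent PIA. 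For the left-hand/antecedent inequalities $\mathsf{S}$: each is of the form $\gamma\leq\mathsf{PIA}_p$ where $\gamma$ is pure or nominal-headed; I would show by induction on the structure of $\mathsf{PIA}_p$ that using the $\Box$-residuation rule ($\alpha\leq\Box\beta$ becomes $\Diamondblack\alpha\leq\beta$) and the $\to$-residuation rule ($\alpha\leq\beta\to\gamma$ becomes $\alpha\land\beta\leq\gamma$, with $\beta\in\mathsf{POS}_{A_p}$ and hence positive in variables below $p$ but not containing $p$), plus the approximation rule to introduce fresh nominals where a displayed occurrence sits inside a $\to$, every such inequality is reduced to the form $\theta\leq p$ with $\theta$ not containing $p$. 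This is exactly the PIA-analysis: a $\mathsf{PIA}_p$ formula has a single displayed occurrence of $p$ reachable only through $\Box$ (left-adjointed away) and through the consequent of implications whose antecedents are $p$-free.

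Third, I would invoke the dependence order $<_\Omega$ to order the elimination. Process the $\Omega$-minimal variable first: all inequalities feeding its Ackermann step are of the form $\theta_k\leq p$ with $\theta_k$ $p$-free (by the PIA analysis) plus inequalities $\eta_j\leq\iota_j$ positive/negative in $p$ as required by the side conditions of the Right-handed Ackermann rule — here one checks that after the manipulations the remaining occurrences of $p$ are all on the correct (negative in $\iota_j,\phi$; positive in $\eta_j,\psi$) side, which follows because positive formulas are positive in all variables and the only other source of $p$ is the succedent $\mathsf{POS}_{\mathsf{Prop}}$. Apply the Ackermann rule with $\theta:=\theta_1\lor\cdots\lor\theta_n$; $p$ disappears and no new propositional variables are introduced, while the substitution $\theta/p$ keeps the remaining systems within the manageable shape (positive/PIA structure is preserved under substituting a $p$-free positive-ish term). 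Iterating up the $<_\Omega$-order eliminates every variable, and since the deleting rules and approximation rule never reintroduce variables, Stage 2 terminates with pure quasi-inequalities; Stage 3 then outputs $\bigcup_i\mathsf{Reduce}(\ldots)$ and its refined regular open translation, which by Proposition \ref{aprop:translation} is a genuine first-order correspondent.

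The main obstacle I expect is the bookkeeping in the second paragraph: showing rigorously that after peeling the succedent with the $\Box$- and $\to$-residuation rules and after the approximation rule has been applied inside antecedents of implications, every leftover inequality can be massaged into exactly the Ackermann-ready shape, with the polarity side conditions (ii) of the Right-handed Ackermann rule verified and with the $\theta_k$'s genuinely free of $p$. This requires a careful simultaneous induction on inductive formulas that respects $<_\Omega$, because peeling a $\mathsf{PIA}_q\to\mathsf{Suc}$ in the succedent injects a fresh $\mathsf{PIA}_q$ inequality that must in turn be reducible, and one must confirm the process is well-founded — which it is, since $<_\Omega$ is irreflexive and transitive on a finite variable set and each peeling step strictly decreases a suitable syntactic measure. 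A secondary subtlety, worth a remark, is that because we lack diamond and conominals the succedent fragment is small enough that no maximal-valuation (left-handed Ackermann) step is ever needed, so only the right-handed Ackermann rule is invoked; this simplifies the argument compared to \cite{CoPa12} at the cost of a narrower input class, exactly as flagged in the Remark after the definition of inductive formulas.
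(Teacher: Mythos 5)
Your proposal is correct and follows essentially the same route as the paper's proof: the same Stage~1 shape analysis ($\mathsf{Ant}$ to $\bigvee\bigwedge\mathsf{PIA}_p$, $\mathsf{Suc}$ to $\bigwedge\mathsf{Suc}'$), the same residuation-based reduction of PIA inequalities to $p$-free minimal-valuation inequalities $\theta\leq p$, the same peeling of the succedent via the $\to$/$\Box$ residuation and approximation rules, and the same elimination of variables in $<_\Omega$-order by the right-handed Ackermann rule. Your additional remarks on polarity bookkeeping and well-foundedness only make explicit what the paper leaves implicit.
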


\begin{proof}
We check the shape of the inequalitys or systems in each stage, for the input formula $\mathsf{Ant}\to\mathsf{Suc}$:

\textbf{Stage 1.} 

After applying the distribution rules, it is easy to see that $\mathsf{Ant}$ becomes the form $\bigvee\bigwedge\mathsf{PIA}_{p}$, and $\mathsf{Suc}$ becomes of the form $\bigwedge\mathsf{Suc}'$, where $$\mathsf{Suc}'::=\mathsf{POS}_{\mathsf{Prop}}\mid\mathsf{PIA}_{q}\to\mathsf{Suc}'\mid\Box\mathsf{Suc}'.$$

Then by applying the splitting rules, we get a set of inequalities of the form $\bigwedge\mathsf{PIA}_{p}\leq\mathsf{Suc}'$.

After the first approximation rule, we get systems of the form $\nomi_0\leq\bigwedge\mathsf{PIA}_{p}\ \Rightarrow\ \nomi_0\leq\mathsf{Suc}'$.

\textbf{Stage 2.}
In this stage, we deal with each system $\nomi_0\leq\bigwedge\mathsf{PIA}_{p}\ \Rightarrow\ \nomi_0\leq\mathsf{Suc}'$.

For the inequality $\nomi_0\leq\bigwedge\mathsf{PIA}_{p}$, by first applying the splitting rule for $\land$ and then exhaustively applying the residuation rules for $\Box$ and $\to$, we get inequalities of the form $\mathsf{MinVal}_{p}\leq p$ or $\mathsf{MinVal}_{p}\leq \top$ or $\mathsf{MinVal}_{p}\leq \bot$, where 
$$\mathsf{MinVal}_{p}::=\nomi_0\mid\Diamondblack\mathsf{MinVal}_{p}\mid \mathsf{MinVal}_{p}\land\mathsf{POS}_{A_{p}},$$
where $A_{p}=\{q\in \mathsf{Prop}\mid q<_{\Omega}p\}$.

Now we deal with the $\nomi_0\leq\mathsf{Suc}'$ part. 

\begin{itemize}
\item If the system is of the form $\mathsf{S}\ \&\ \bigamp(\mathsf{MinVal}_{p}\leq p)\ \Rightarrow\ \nomi_0\leq\mathsf{PIA}_{q}\to\mathsf{Suc}'$, then we have the following execution of $\mathsf{ALBA}$:

$\mathsf{S}\ \&\ \bigamp(\mathsf{MinVal}_{p}\leq p)\ \Rightarrow\ \nomi_0\land\mathsf{PIA}_{q}\leq\mathsf{Suc}'$\\
$\mathsf{S}\ \&\ \bigamp(\mathsf{MinVal}_{p}\leq p)\ \&\ \nomj\leq\nomi_0\land\mathsf{PIA}_{q}\ \Rightarrow\ \nomj\leq\mathsf{Suc}'$\\
$\mathsf{S}\ \&\ \bigamp(\mathsf{MinVal}_{p}\leq p)\ \&\ \nomj\leq\nomi_0\ \&\ \nomj\leq\mathsf{PIA}_{q}\ \Rightarrow\ \nomj\leq\mathsf{Suc}'$\\
$\mathsf{S}\ \&\ \bigamp(\mathsf{MinVal}_{p}\leq p)\ \&\ \nomj\leq\nomi_0\ \Rightarrow\ \nomj\leq\mathsf{Suc}'.$

\item If the system is of the form $\bigamp(\mathsf{MinVal}_{p}\leq p)\ \Rightarrow\ \nomi_0\leq\Box\mathsf{Suc}'$, then we have the following execution of $\mathsf{ALBA}$:

$\bigamp(\mathsf{MinVal}_{p}\leq p)\ \Rightarrow\ \Diamondblack\nomi_0\leq\mathsf{Suc}'$\\
$\bigamp(\mathsf{MinVal}_{p}\leq p)\ \&\ \nomj\leq\Diamondblack\nomi_0\ \Rightarrow\ \nomj\leq\mathsf{Suc}'$
\end{itemize}
Therefore, by the reduction strategies above, we get a quasi-inequality of the form (here $\mathsf{Pure}$ is a meta-conjunction of inequalities without propositional variables):
$$\bigamp(\mathsf{MinVal}_{p}\leq p)\ \&\ \mathsf{Pure}\ \Rightarrow\ \nomj\leq \mathsf{POS}_{\mathsf{Prop}}.$$

Now we can apply the right-handed Ackermann rule to an $\Omega$-miminal variable $q$ to eliminate it. Since there are only finitely many propositional variables, we can always find another $\Omega$-miminal variable to eliminate. Finally we eliminate all propositional variables and get pure quasi-inequalities and the refined regular open translation.
\end{proof}

\section{Soundness}\label{aSec:soundness}

In this section, we prove the soundness of $\mathsf{ALBA}$ with respect to modal FM frames. The basic proof structure is similar to \cite{CoPa12,CP:constructive,Zh22a}.

\begin{theorem}[Soundness]\label{bThm:Soundness}
If $\mathsf{ALBA}$ runs according to the success proof in the previous section on an input inductive formula $\phi\to\psi$ and outputs a first-order formula $\mathsf{FO(\phi\to\psi)}$, then for any modal FM frame $\mathbb{F}=(X,\leq_1,\leq_2,R)$, $$\mathbb{F}\Vdash\mathsf{\phi\to\psi}\mbox{ iff }\mathbb{F}\vDash\mathsf{FO(\phi\to\psi)}.$$
\end{theorem}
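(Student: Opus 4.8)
The plan is to follow the standard ALBA soundness architecture: the first-order correspondent $\mathsf{FO}(\phi\to\psi)$ is obtained as a conjunction of refined regular open translations of pure quasi-inequalities produced by the algorithm, so by Proposition \ref{aprop:translation} it suffices to show that $\mathbb{F}\Vdash\phi\to\psi$ is equivalent, for every modal FM frame $\mathbb{F}$, to the validity on $\mathbb{F}$ of the output quasi-inequalities. This in turn reduces, by Proposition \ref{aprop:inequalities} and the semantics of (quasi-)inequalities, to checking that each transformation rule applied in the success proof is \emph{sound}, i.e.\ preserves validity on $\mathbb{F}$ in both directions. So the proof is organized rule by rule, interpreting everything in the algebraic structures $\mathbb{H}_{\mathsf{RO}_{12}}$ and $\mathbb{B}_{\mathsf{K}}$ associated with $\mathbb{F}$.

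First I would set up the algebraic semantics: an assignment of propositional variables to elements of $\mathbb{H}_{\mathsf{RO}_{12}}$ and of nominals to elements of $\mathsf{Nom}(\mathbb{H}_{\mathsf{RO}_{12}})$ induces a value $\val{\chi}\in\mathbb{H}_{\mathsf{RO}_{12}}$ for every $\mathcal{L}$-formula and, via the left adjoint $c$ and the folklore computations of Lemmas \ref{alem:Diamond:Full} and \ref{alem:Diamond:RO}, a value for every $\mathcal{L}^+$-formula containing $\Diamondblack$; validity of $\alpha\leq\beta$ on $\mathbb{F}$ then means $\val{\alpha}\le\val{\beta}$ under all such assignments. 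I would then dispatch the easy rules: the distribution rules are sound because $\cap$ distributes over $\lor_{\mathsf{RO}_{12}}$ and $\Box_{\mathsf{RO}_{12}}$ preserves meets in the completely multiplicative HAO $\mathbb{H}_{\mathsf{RO}_{12}}$; the splitting rules are sound because $\val{\alpha}\le\val{\beta}\wedge\val{\gamma}$ iff $\val{\alpha}\le\val{\beta}$ and $\val{\alpha}\le\val{\gamma}$, and dually for $\lor$; the residuation rules for $\Box/\Diamondblack$ are exactly the adjunction $\Diamondblack_{\mathsf{RO}_{12}}\dashv\Box_{\mathsf{RO}_{12}}$, and the residuation rules for $\wedge/\to$ are the Heyting residuation in $\mathbb{H}_{\mathsf{RO}_{12}}$; the deleting rules are sound since $\val{\alpha}\le\top$ always holds; and the first-approximation and approximation rules are sound because $\mathsf{Nom}(\mathbb{H}_{\mathsf{RO}_{12}})$ is join-dense (Proposition \ref{aprop:regular:open:join}), so $\val{\alpha}\le\val{\beta}$ iff every nominal below $\val\alpha$ is below $\val\beta$, taking care of the eigenvariable side condition in the usual way.

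The main obstacle, as always, is soundness of the \emph{right-handed Ackermann rule}. Here I would argue as in \cite{CoPa12,Zh22a}: fix an assignment validating the premise; for the forward direction substitute $\theta=\theta_1\vee\ldots\vee\theta_n$ for $p$, using that each $\theta_i$ is $p$-free so $\val{\theta}$ does not depend on the value of $p$, and that from $\val{\theta_i}\le\val{p}$ one gets $\val\theta\le\val p$, whence the positivity of $\eta_j,\psi$ and negativity of $\iota_j,\phi$ in $p$ give $\val{\eta_j}\le\val{\eta_j(\theta/p)}$-style monotonicity inequalities that transport the conclusion; for the backward direction, re-assign $p$ to $\val{\theta}$ and observe that this assignment satisfies all hypotheses of the premise (the inequalities $\theta_i\le p$ become $\val{\theta_i}\le\val\theta$, which hold) and hence the conclusion of the premise, which under this assignment is syntactically the conclusion of the rule. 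The only subtlety specific to the present setting is that $\mathbb{H}_{\mathsf{RO}_{12}}$ is \emph{not} perfect and $e$ is not a sublattice embedding, so one must be careful that the monotonicity arguments are carried out inside $\mathbb{H}_{\mathsf{RO}_{12}}$ (where joins are $\lor_{\mathsf{RO}_{12}}=\mathsf{I}_1\mathsf{C}_2(\cup)$) rather than in $\mathbb{B}_{\mathsf{K}}$; since positivity/negativity is defined via the monotone/antitone behaviour of the $\mathbb{H}_{\mathsf{RO}_{12}}$-operations, this goes through. Finally I would assemble the rule-by-rule equivalences: running the algorithm transforms $\phi\to\psi$ first to the inequality $\phi\le\psi$, then through Stages 1–2 into a set of pure quasi-inequalities whose conjunction is valid on $\mathbb{F}$ iff $\phi\le\psi$ is, and then Proposition \ref{aprop:translation} identifies that conjunction's validity with $\mathbb{F}\vDash\mathsf{FO}(\phi\to\psi)$, completing the chain $\mathbb{F}\Vdash\phi\to\psi\iff\mathbb{F}\vDash\mathsf{FO}(\phi\to\psi)$.
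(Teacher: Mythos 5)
Your proposal is correct and follows essentially the same route as the paper: the identical chain of equivalences reduced to rule-by-rule soundness (join-density of $\mathsf{Nom}(\mathbb{H}_{\mathsf{RO}_{12}})$ via Proposition \ref{aprop:regular:open:join} for the approximation rules, adjunction and Heyting residuation for the residuation rules, the valuation-update $V^{p}_{V(\theta)}$ plus polarity argument for the right-handed Ackermann rule), capped off by Proposition \ref{aprop:translation}. The only blemish is a harmless mislabelling of the two directions of the Ackermann argument and of the monotonicity inequality (positivity of $\eta_j$ in $p$ together with $\val{\theta}\le\val{p}$ yields $\val{\eta_j(\theta/p)}\le\val{\eta_j}$, not the reverse), but both ideas the paper uses are present in your sketch.
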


\begin{proof}
The proof goes similarly to \cite[Theorem 8.1]{CoPa12}. Let $\phi\to\psi$ denote the input formula, $\{\nomi_0\leq\phi_i\ \Rightarrow\ \nomi_0\leq\psi_i\}_{i\in I}$ denote the set of systems after the first-approximation rule, let $\{\mbox{Reduce}(\nomi_0\leq\phi_i\ \Rightarrow\ \nomi_0\leq\psi_i)\}_{i\in I}$ denote the sets of quasi-inequalities after Stage 2, let $\mathsf{FO(\phi\to\psi)}$ denote the refined regular open translation of the quasi-inequalities in Stage 3 into first-order formulas, then it suffices to show the equivalence from (\ref{bCrct:Eqn0}) to (\ref{bCrct:Eqn4}) given below:

\begin{eqnarray}
&&\mathbb{F}\Vdash\phi\to\psi\label{bCrct:Eqn0}\\
&&\mathbb{F}\Vdash\nomi_0\leq\phi_i\ \Rightarrow\ \nomi_0\leq\psi_i,\mbox{ for all }i\in I\label{bCrct:Eqn2}\\
&&\mathbb{F}\Vdash\mbox{Reduce}(\nomi_0\leq\phi_i\ \Rightarrow\ \nomi_0\leq\psi_i),\mbox{ for all }i\in I\label{bCrct:Eqn3}\\
&&\mathbb{F}\vDash\mathsf{FO(\phi\to\psi)}\label{bCrct:Eqn4}
\end{eqnarray}

The equivalence between (\ref{bCrct:Eqn0}) and (\ref{bCrct:Eqn2}) follows from Proposition \ref{prop:Soundness:stage:1};

The equivalence between (\ref{bCrct:Eqn2}) and (\ref{bCrct:Eqn3}) follows from Propositions \ref{bProp:Stage:2}, \ref{bProp:Ackermann};

The equivalence between (\ref{bCrct:Eqn3}) and (\ref{bCrct:Eqn4}) follows from Proposition \ref{aprop:translation}.
\end{proof}

In what follows, we will prove the soundness of each rule in each stage.

\begin{proposition}[Soundness of the rules in Stage 1]\label{prop:Soundness:stage:1}
The distribution rules and the splitting rules are sound in both directions in $\mathbb{F}$.
\end{proposition}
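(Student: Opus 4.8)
\textbf{Proof plan for Proposition \ref{prop:Soundness:stage:1}.}

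The plan is to verify soundness of each rule separately, in both directions, by unwinding the semantic clauses of Section \ref{aSec:Prelim} and using the fact (Proposition \ref{aprop:inequalities}) that an inequality $\phi\leq\psi$ is valid on $\mathbb{F}$ exactly when $\llbracket\phi\rrbracket^{\mathbb{M}}\subseteq\llbracket\psi\rrbracket^{\mathbb{M}}$ for every model $\mathbb{M}$ on $\mathbb{F}$, i.e.\ exactly when $\llbracket\phi\rrbracket^{\mathbb{M}}\wedge_{\mathsf{RO}_{12}}\cdots$ behaves as expected in the HAO $\mathbb{H}_{\mathsf{RO}_{12}}$. The key observation that makes everything go through uniformly is that for any formula $\chi$, $\llbracket\chi\rrbracket^{\mathbb{M}}\in\mathsf{RO}_{12}(X)$, so $\llbracket\cdot\rrbracket^{\mathbb{M}}$ is (by induction on formulas, using the clauses for $\land,\lor,\to,\Box$) an interpretation into the complete HAO $\mathbb{H}_{\mathsf{RO}_{12}}$; concretely $\llbracket\phi\land\psi\rrbracket=\llbracket\phi\rrbracket\cap\llbracket\psi\rrbracket$ and $\llbracket\phi\lor\psi\rrbracket=\llbracket\phi\rrbracket\lor_{\mathsf{RO}_{12}}\llbracket\psi\rrbracket=\mathsf{I}_1\mathsf{C}_2(\llbracket\phi\rrbracket\cup\llbracket\psi\rrbracket)$.

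First I would treat the distribution rules. Each of them — the two in $\mathsf{Ant}$ and the four in $\mathsf{Suc}$ — asserts that two formulas $\alpha_1,\alpha_2$ have the same truth set in every model on $\mathbb{F}$; since validity of inequalities is preserved under replacement of equi-valid subformulas (a routine congruence argument, proved by induction on the formula context using monotonicity of all connectives in each coordinate where they are monotone and the fact that $\to$ is antitone in its first argument but that the replaced subformulas are \emph{equal}, not merely ordered), it suffices to check the pointwise equalities $\llbracket\alpha_1\rrbracket^{\mathbb{M}}=\llbracket\alpha_2\rrbracket^{\mathbb{M}}$. These are exactly the distributive-lattice identities, the residuation identity $a\to(b\cap c)=(a\to b)\cap(a\to c)$, and the identity $\Box(b\cap c)=\Box b\cap\Box c$, all of which hold in $\mathbb{H}_{\mathsf{RO}_{12}}$ because it is a complete Heyting algebra (distributivity, since complete Heyting algebras are distributive; the $\to$ identity holds in any Heyting algebra) with a completely multiplicative operator $\Box_{\mathsf{RO}_{12}}$ (by the Proposition preceding Section \ref{aSec:Algebraic:Semantics}). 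One must be slightly careful that $\lor_{\mathsf{RO}_{12}}$ is \emph{not} union, so e.g.\ $(\beta\land\gamma)\lor\alpha=(\beta\lor\alpha)\land(\gamma\lor\alpha)$ has to be justified as an identity in the Heyting algebra $\mathbb{H}_{\mathsf{RO}_{12}}$ rather than by naive set manipulation — but this is precisely the distributivity of $\mathbb{H}_{\mathsf{RO}_{12}}$.

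Next the splitting rules. Here the claim is $\mathbb{F}\Vdash\alpha\leq\beta\land\gamma$ iff ($\mathbb{F}\Vdash\alpha\leq\beta$ and $\mathbb{F}\Vdash\alpha\leq\gamma$), and dually for $\alpha\lor\beta\leq\gamma$. Fixing a model $\mathbb{M}$, the first is immediate from $\llbracket\beta\land\gamma\rrbracket=\llbracket\beta\rrbracket\cap\llbracket\gamma\rrbracket$ and the fact that $Y\subseteq Z_1\cap Z_2$ iff $Y\subseteq Z_1$ and $Y\subseteq Z_2$; the second uses that $\llbracket\alpha\lor\beta\rrbracket$ is the join of $\llbracket\alpha\rrbracket$ and $\llbracket\beta\rrbracket$ in $\mathbb{H}_{\mathsf{RO}_{12}}$, hence the \emph{least} $\mathsf{RO}_{12}$-set containing both, so $\llbracket\alpha\lor\beta\rrbracket\subseteq\llbracket\gamma\rrbracket$ iff $\llbracket\alpha\rrbracket\subseteq\llbracket\gamma\rrbracket$ and $\llbracket\beta\rrbracket\subseteq\llbracket\gamma\rrbracket$, using that $\llbracket\gamma\rrbracket\in\mathsf{RO}_{12}(X)$. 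Quantifying over all $\mathbb{M}$ on $\mathbb{F}$ gives both directions.

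\textbf{Main obstacle.} There is no deep obstacle; the one place demanding care is the congruence/replacement step for the distribution rules — making sure that rewriting a subformula deep inside $\mathsf{Ant}$ or $\mathsf{Suc}$ preserves validity of the ambient inequality. This needs the observation that all connectives are monotone in each argument except the antecedent of $\to$, together with the fact that we are replacing a subformula by an \emph{equi-valid} one (two-sided), so monotonicity in either direction suffices; I would state this as a small lemma ("$\mathbb{H}_{\mathsf{RO}_{12}}$ validates a set of equations closed under the term-forming operations, hence $\mathsf{ALBA}$'s distribution rewrites preserve truth sets") and then the rest is bookkeeping. The $\lor_{\mathsf{RO}_{12}}\neq\cup$ subtlety is worth an explicit sentence but is resolved entirely by appealing to the Heyting-algebra structure of $\mathbb{H}_{\mathsf{RO}_{12}}$.
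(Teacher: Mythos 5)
Your proposal is correct and follows essentially the same route as the paper: the distribution rules are reduced to the validity of the corresponding equivalences (distributivity, the Heyting identity for $\to$ over $\land$, and multiplicativity of $\Box$), and the splitting rules to the characterization of $\land$ as meet and of $\lor$ as the join in $\mathbb{H}_{\mathsf{RO}_{12}}$; your extra care about the congruence/replacement step and about $\lor_{\mathsf{RO}_{12}}\neq\cup$ only makes explicit what the paper leaves implicit. Note only that the paper's proof of this proposition additionally establishes soundness of the first-approximation rule (via join-density of $\mathsf{Nom}(\mathbb{H}_{\mathsf{RO}_{12}})$, Proposition \ref{aprop:regular:open:join}), which is not mentioned in the statement and which you therefore, reasonably, did not address, but which Theorem \ref{bThm:Soundness} relies on this proposition to supply.
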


\begin{proof}
For the soundness of the distribution rules, it follows from the validity of the following equivalences in $\mathbb{F}$:
\begin{itemize}
\item $(\alpha\lor\beta)\land\gamma\leftrightarrow(\alpha\land\gamma)\lor(\beta\land\gamma)$;
\item $\alpha\land(\beta\lor\gamma)\leftrightarrow(\alpha\land\beta)\lor(\alpha\land\gamma)$;
\item $\Box(\alpha\land\beta)\leftrightarrow\Box\alpha\land\Box\beta$;
\item $(\alpha\land\beta)\lor\gamma\leftrightarrow(\alpha\lor\gamma)\land(\beta\lor\gamma)$;
\item $\alpha\lor(\beta\land\gamma)\leftrightarrow(\alpha\lor\beta)\land(\alpha\lor\gamma)$;
\item $(\alpha\to\beta\land\gamma)\leftrightarrow(\alpha\to\beta)\land(\alpha\to\gamma)$.
\end{itemize}

For the soundness of the splitting rules, it follows from the following fact:

$$\mathbb{F}\vDash\alpha\leq\beta\land\gamma\mbox{ iff }(\mathbb{F}\vDash\alpha\leq\beta\mbox{ and }\mathbb{F}\vDash\alpha\leq\gamma);$$
$$\mathbb{F}\vDash\alpha\lor\beta\leq\gamma\mbox{ iff }(\mathbb{F}\vDash\alpha\leq\gamma\mbox{ and }\mathbb{F}\vDash\beta\leq\gamma).$$

For the soundness of the first-approximation rule, by Proposition \ref{aprop:regular:open:join}, $$V(\phi_i)=\bigvee_{\mathsf{RO}_{12}}\{Z\in\mathsf{Nom}(\mathbb{H}_{\mathsf{RO}_{12}})\mid Z\subseteq V(\phi_i)\}.$$
Now we have the following chain of equivalences:
\begin{center}
\begin{tabular}{r l}
& $\mathbb{F}\Vdash\phi_i\leq\psi_i$\\
iff & for any $V$, $\bigvee_{\mathsf{RO}_{12}}\{Z\in\mathsf{Nom}(\mathbb{H}_{\mathsf{RO}_{12}})\mid Z\subseteq V(\phi_i)\}\subseteq V(\psi_i)$\\
iff & for any $V$, $(\forall Z\in \mathsf{Nom}(\mathbb{H}_{\mathsf{RO}_{12}})\mbox{ s.t. }Z\subseteq V(\phi_i))(Z\subseteq V(\psi_i))$\\
iff & for any $V$, $(\forall Z\in \mathsf{Nom}(\mathbb{H}_{\mathsf{RO}_{12}})(Z\subseteq V(\phi_i)\ \Rightarrow\ Z\subseteq V(\psi_i))$\\
iff & for any $V'$, $V'(\nomi)\subseteq V'(\phi_i)\ \Rightarrow\ V'(\nomi)\subseteq V'(\psi_i)$\\
iff & $\mathbb{F}\Vdash\nomi\leq\phi_i\ \Rightarrow\ \nomi\leq\psi_i$.\\
\end{tabular}
\end{center}
\end{proof}

For the soundness of each rule in Stage 2, we introduce the following notations: for each rule, before its application we have a system $\mathsf{S}\ \Rightarrow \mathsf{Ineq}$, after its application we get a system $\mathsf{S}'\ \Rightarrow \mathsf{Ineq}'$ (indeed, for the splitting rules we might have two systems after the application, but their soundness are trivial), the soundness of Stage 2 is then the equivalence of the following:
\begin{itemize}
\item $\mathbb{F}\Vdash\mathsf{S}\ \Rightarrow \mathsf{Ineq}$
\item $\mathbb{F}\Vdash\mathsf{S}'\ \Rightarrow \mathsf{Ineq}'$
\end{itemize}

It suffices to show the following property for the splitting rules, residuation rules and the deleting rules:
\begin{center}
For any $\mathbb{F}$, any $V$, 

$\mathbb{F},V\Vdash\mathsf{S}\ \Rightarrow \mathsf{Ineq}$ iff $\mathbb{F},V\Vdash\mathsf{S}'\ \Rightarrow \mathsf{Ineq}'.$
\end{center}

For the first-approximation rule, we prove it directly.

For the right-handed Ackermann rule, we also prove it directly.

\begin{proposition}\label{bProp:Stage:2}
The splitting rules, the approximation rule, the residuation rules and the deleting rules in Stage 2 are sound in both directions in $\mathbb{F}$.
\end{proposition}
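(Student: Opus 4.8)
The plan is to verify soundness for each rule in Stage 2 separately, by reducing each case to a semantic equivalence at the level of a fixed frame $\mathbb{F}$ and valuation $V$. For the splitting rules, the residuation rules and the deleting rules it suffices, as noted, to prove the local statement $\mathbb{F},V\Vdash\mathsf{S}\ \Rightarrow\mathsf{Ineq}$ iff $\mathbb{F},V\Vdash\mathsf{S}'\ \Rightarrow\mathsf{Ineq}'$, since global validity is just the conjunction of these over all $V$. The splitting rules are immediate from the two facts already recorded in Proposition~\ref{prop:Soundness:stage:1}, namely that $\mathbb{F},V\Vdash\alpha\leq\beta\land\gamma$ iff $\mathbb{F},V\Vdash\alpha\leq\beta$ and $\mathbb{F},V\Vdash\alpha\leq\gamma$, and dually for $\lor$ on the left; these push through an arbitrary meta-conjunction $\mathsf{S}$ and an arbitrary succedent without difficulty. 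The deleting rules are equally routine: $\alpha\leq\top$ holds in every model, so it may be dropped from the antecedent of a quasi-inequality, and the second deleting rule just records that $\mathsf{S}\ \Rightarrow\ \alpha\leq\top$ is vacuously valid.

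The residuation rules are the substantive part, and here I would invoke the adjunction facts established in Section~\ref{Sec:Semantic:Environment}. For the $\Diamondblack/\Box$ rule, the key point is that $\Diamondblack_{\mathsf{RO}_{12}}$ is by construction the left adjoint of $\Box_{\mathsf{RO}_{12}}$ in the complete Heyting algebra $\mathbb{H}_{\mathsf{RO}_{12}}$ (Lemma~\ref{alem:Diamond:RO} and the surrounding discussion), so for any $Y,Z\in\mathsf{RO}_{12}(X)$ we have $\Diamondblack_{\mathsf{RO}_{12}}(Y)\subseteq Z$ iff $Y\subseteq\Box_{\mathsf{RO}_{12}}(Z)$. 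Applying this with $Y=\llbracket\alpha\rrbracket^{\mathbb{M}}$ and $Z=\llbracket\beta\rrbracket^{\mathbb{M}}$ — both of which are refined regular open by the soundness of the translation (Proposition~\ref{aprop:translation}) and the fact that $\mathbb{H}_{\mathsf{RO}_{12}}$ is closed under the relevant operations — gives exactly $\mathbb{F},V\Vdash\Diamondblack\alpha\leq\beta$ iff $\mathbb{F},V\Vdash\alpha\leq\Box\beta$, and this equivalence can be placed inside any meta-context $\mathsf{S}\ \Rightarrow\ (-)$ on either side of the turnstile. For the $\land/\to$ rule, the corresponding algebraic fact is that $\mathbb{H}_{\mathsf{RO}_{12}}$ is a Heyting algebra, so $\cap$ has the Heyting implication $\to_{\mathsf{RO}_{12}}$ as its residual: $Y\cap Z\subseteq V$ iff $Y\subseteq Z\to_{\mathsf{RO}_{12}}V$. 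Since the interpretation of $\land$ is $\cap$ and the interpretation of $\to$ is $\to_{\mathsf{RO}_{12}}$, this yields $\mathbb{F},V\Vdash\alpha\land\beta\leq\gamma$ iff $\mathbb{F},V\Vdash\alpha\leq\beta\to\gamma$, again insertable into an arbitrary meta-context.

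For the approximation rule $\mathsf{S}\ \Rightarrow\ \phi\leq\psi$ over $\mathsf{S}\ \&\ \nomi\leq\phi\ \Rightarrow\ \nomi\leq\psi$ (with $\nomi$ fresh), I would argue at the level of global validity rather than a fixed valuation, because the rule changes the valuation: the downward direction is trivial (instantiate), and the upward direction uses Proposition~\ref{aprop:regular:open:join}, exactly as in the first-approximation case — $\llbracket\phi\rrbracket^{\mathbb{M}}$ is the $\mathsf{RO}_{12}$-join of the nominal elements below it, so $\llbracket\phi\rrbracket^{\mathbb{M}}\subseteq\llbracket\psi\rrbracket^{\mathbb{M}}$ iff every nominal element below $\llbracket\phi\rrbracket$ is below $\llbracket\psi\rrbracket$, and freshness of $\nomi$ lets one quantify over all reinterpretations of $\nomi$ while keeping $\mathsf{S}$, $\phi$, $\psi$ fixed. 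The right-handed Ackermann rule is deferred to Proposition~\ref{bProp:Ackermann} and is not treated here. I expect the main obstacle to be bookkeeping rather than conceptual: one must make sure that every formula to which an adjunction is applied really does denote a refined regular open set (so that the adjunctions in $\mathbb{H}_{\mathsf{RO}_{12}}$, not merely in $\mathbb{B}_{\mathsf{K}}$, are the ones in play), and that the meta-conjunction $\mathsf{S}$ and the side formulas $\phi,\psi$ are genuinely untouched by each rule so that the local equivalences compose into the desired global one.
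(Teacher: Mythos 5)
Your proposal is correct and follows essentially the same route as the paper: splitting, residuation and deleting rules are handled locally at a fixed valuation via the lattice equivalences and the adjunction/residuation pairs $(\Diamondblack_{\mathsf{RO}_{12}},\Box_{\mathsf{RO}_{12}})$ and $(\cap,\to_{\mathsf{RO}_{12}})$, while the approximation rule is treated globally using the join-density of $\mathsf{Nom}(\mathbb{H}_{\mathsf{RO}_{12}})$ (Proposition~\ref{aprop:regular:open:join}) together with the freshness of the introduced nominal. Your added remark that one must check the relevant truth sets lie in $\mathbb{H}_{\mathsf{RO}_{12}}$ (so the adjunctions there, not in $\mathbb{B}_{\mathsf{K}}$, apply) is a sensible point the paper leaves implicit.
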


\begin{proof}
The soundness proofs for the splitting rules and the residuation rules are similar to the soundness of the same rules in \cite{CoPa12}.
\begin{itemize}
\item For the splitting rules, it follows from the following equivalence: for any modal FM frame $\mathbb{F}$, any valuation $V$ on $\mathbb{F}$, 
$$\mathbb{F},V\Vdash\alpha\leq\beta\land\gamma\mbox{ iff }\mathbb{F},V\Vdash\alpha\leq\beta\mbox{ and }\mathbb{F},V\Vdash\alpha\leq\gamma;$$
$$\mathbb{F},V\Vdash\alpha\lor\beta\leq\gamma\mbox{ iff }\mathbb{F},V\Vdash\alpha\leq\gamma\mbox{ and }\mathbb{F},V\Vdash\beta\leq\gamma.$$

\item For the residuation rules, it follows from the following equivalence: for any modal FM frame $\mathbb{F}$, any valuation $V$ on $\mathbb{F}$, 

\begin{itemize}
\item $\mathbb{F},V\Vdash\alpha\leq\Box\beta\mbox{ iff }\mathbb{F},V\Vdash\Diamondblack\alpha\leq\beta;$
\item $\mathbb{F},V\Vdash\alpha\leq\beta\to\gamma\mbox{ iff }\mathbb{F},V\Vdash\alpha\land\beta\leq\gamma.$
\end{itemize}

The equivalences above follow from the fact that the interpretations of $\Diamondblack$ and $\Box$ form an adjunction pair, and the interpretations of $\land$ and $\to$ form a residuation pair.

\item For the approximation rule, the soundness proof is similar to the first-approximation rule: for any modal FM frame $\mathbb{F}$, any valuation $V$, 

\begin{center}
\begin{tabular}{r l}
& $\mathbb{F},V\Vdash\phi_i\leq\psi_i$\\
iff & $V(\phi_i)\subseteq V(\psi_i)$\\
iff & $\bigvee_{\mathsf{RO}_{12}}\{Z\in\mathsf{Nom}(\mathbb{H}_{\mathsf{RO}_{12}})\mid Z\subseteq V(\phi_i)\}\subseteq V(\psi_i)$\\
iff & $(\forall Z\in \mathsf{Nom}(\mathbb{H}_{\mathsf{RO}_{12}})\mbox{ s.t. }Z\subseteq V(\phi_i))(Z\subseteq V(\psi_i))$\\
iff & $(\forall Z\in \mathsf{Nom}(\mathbb{H}_{\mathsf{RO}_{12}})(Z\subseteq V(\phi_i)\ \Rightarrow\ Z\subseteq V(\psi_i))$.\\
\end{tabular}
\end{center}

Therefore, if $\mathbb{F}\Vdash\mathsf{S}\ \Rightarrow\ \phi_i\leq\psi_i$, then for any $V$, if $\mathbb{F},V\Vdash\mathsf{S}$ and $V(\nomi)\subseteq V(\phi_i)$, then $V(\phi_i)\subseteq V(\psi_i)$, so for $V(\nomi)\in \mathsf{Nom}(\mathbb{H}_{\mathsf{RO}_{12}})$, from the above equivalences we have $V(\nomi)\subseteq V(\psi_i)$, i.e. $\mathbb{F},V\Vdash\nomi\leq\psi_i$. Thus $\mathbb{F}\Vdash\mathsf{S}\ \&\ \nomi\leq\phi_i\ \Rightarrow\ \nomi\leq\psi_i$.

If $\mathbb{F}\Vdash\mathsf{S}\ \&\ \nomi\leq\phi_i\ \Rightarrow\ \nomi\leq\psi_i$, then for any valuation $V$, if $\mathbb{F},V\Vdash\mathsf{S}$, then for any $Z\in\mathsf{Nom}(\mathbb{H}_{\mathsf{RO}_{12}})$, if $Z\subseteq V(\phi_i)$, take $V':=V^{\nomi}_{Z}$, then since $\nomi$ does not occur in $\mathsf{S}$, we have $\mathbb{F},V'\Vdash\mathsf{S}$. We also have $V'(\nomi)=Z\subseteq V(\phi_i)=V'(\phi_i)$, so from $\mathbb{F}\Vdash\mathsf{S}\ \&\ \nomi\leq\phi_i\ \Rightarrow\ \nomi\leq\psi_i$ we get $Z=V'(\nomi)\subseteq V'(\psi_i)=V(\psi_i)$, therefore $\mathbb{F},V\Vdash\phi_i\leq\psi_i$. Therefore we get $\mathbb{F}\Vdash\mathsf{S}\ \Rightarrow\ \phi_i\leq\psi_i$.

\item The soundness of the deleting rule is trivial, since $\alpha\leq\top$ always holds in any modal FM model.
\end{itemize}
\end{proof}

\begin{proposition}\label{bProp:Ackermann}
The right-handed Ackermann rule is sound in $\mathbb{F}$.
\end{proposition}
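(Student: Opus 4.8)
\emph{Proof proposal.}
The plan is to prove the two-way equivalence by the standard Ackermann-lemma argument, carried out in the $\mathbb{H}_{\mathsf{RO}_{12}}$-valued semantics. First I would record two auxiliary facts. The \emph{substitution lemma}: for every $\mathcal{L}^{+}$-formula $\chi$ and every valuation $V$, $V(\chi(\theta/p))=V^{p}_{V(\theta)}(\chi)$, where $V^{p}_{U}$ denotes the valuation agreeing with $V$ except that $p\mapsto U$; this is legitimate because truth sets of $\mathcal{L}^{+}$-formulas lie in $\mathsf{RO}_{12}(X)$, so $V(\theta)\in\mathsf{RO}_{12}(X)$ and $V^{p}_{V(\theta)}$ is again a valuation, and it is proved by an immediate induction on $\chi$. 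The \emph{monotonicity lemma}: if $\chi$ is positive (resp.\ negative) in $p$ and $U\subseteq U'$ in $\mathbb{H}_{\mathsf{RO}_{12}}$, then $V^{p}_{U}(\chi)\subseteq V^{p}_{U'}(\chi)$ (resp.\ $V^{p}_{U'}(\chi)\subseteq V^{p}_{U}(\chi)$); this follows by induction on $\chi$ from the fact that $\cap$, $\lor_{\mathsf{RO}_{12}}$, $\Box_{\mathsf{RO}_{12}}$ and $\Diamondblack_{\mathsf{RO}_{12}}$ are monotone in each argument and $\to_{\mathsf{RO}_{12}}$ is antitone in its first and monotone in its second argument, which holds in every complete HAO (and $\Diamondblack_{\mathsf{RO}_{12}}$, being a left adjoint, even preserves joins).

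For the top-to-bottom direction I would assume $\mathbb{F}\Vdash\bigamp_{i=1}^{n}\theta_i\leq p\ \&\ \bigamp_{j=1}^{m}\eta_j\leq\iota_j\ \Rightarrow\ \phi\leq\psi$ and take any valuation $V$ with $\mathbb{F},V\Vdash\eta_j(\theta/p)\leq\iota_j(\theta/p)$ for all $j$. Put $V':=V^{p}_{V(\theta)}$. Since $p$ does not occur in any $\theta_i$, $V'(\theta_i)=V(\theta_i)\subseteq\bigvee_{\mathsf{RO}_{12}}\{V(\theta_1),\ldots,V(\theta_n)\}=V(\theta)=V'(p)$, so $\mathbb{F},V'\Vdash\theta_i\leq p$ for all $i$; and by the substitution lemma $\mathbb{F},V'\Vdash\eta_j\leq\iota_j$ for all $j$. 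Hence $\mathbb{F},V'\Vdash\phi\leq\psi$, which by the substitution lemma once more is exactly $\mathbb{F},V\Vdash\phi(\theta/p)\leq\psi(\theta/p)$; this direction needs neither positivity nor negativity.

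For the bottom-to-top direction I would assume $\mathbb{F}\Vdash\bigamp_{j=1}^{m}\eta_j(\theta/p)\leq\iota_j(\theta/p)\ \Rightarrow\ \phi(\theta/p)\leq\psi(\theta/p)$ and take any valuation $V$ with $\mathbb{F},V\Vdash\theta_i\leq p$ for all $i$ and $\mathbb{F},V\Vdash\eta_j\leq\iota_j$ for all $j$. From $V(\theta_i)\subseteq V(p)$ for all $i$ together with $V(p)\in\mathsf{RO}_{12}(X)$ I get $V(\theta)=\bigvee_{\mathsf{RO}_{12}}\{V(\theta_1),\ldots,V(\theta_n)\}\subseteq V(p)$. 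Put $V':=V^{p}_{V(\theta)}$, so $V'(p)\subseteq V(p)$. By the monotonicity lemma, since $\eta_j$ and $\psi$ are positive and $\iota_j$ and $\phi$ negative in $p$, we have $V'(\eta_j)\subseteq V(\eta_j)$, $V(\iota_j)\subseteq V'(\iota_j)$, $V'(\psi)\subseteq V(\psi)$ and $V(\phi)\subseteq V'(\phi)$. Hence $V'(\eta_j)\subseteq V(\eta_j)\subseteq V(\iota_j)\subseteq V'(\iota_j)$, i.e.\ $\mathbb{F},V\Vdash\eta_j(\theta/p)\leq\iota_j(\theta/p)$ for all $j$ by the substitution lemma; the hypothesis then gives $V'(\phi)\subseteq V'(\psi)$, whence $V(\phi)\subseteq V'(\phi)\subseteq V'(\psi)\subseteq V(\psi)$, i.e.\ $\mathbb{F},V\Vdash\phi\leq\psi$.

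The one point that requires care — and the only real (though mild) obstacle — is that all reinterpreted valuations must take values in $\mathbb{H}_{\mathsf{RO}_{12}}$ rather than in the larger $\mathbb{B}_{\mathsf{K}}$: this is precisely why $V(\theta)$ must be formed with the $\mathsf{RO}_{12}$-join $\lor_{\mathsf{RO}_{12}}$ and not with set-theoretic union, and why it matters that $V(p)$, being a legitimate value of a propositional variable, is refined regular open, so that $V(\theta_i)\subseteq V(p)$ for all $i$ genuinely forces $V(\theta)\subseteq V(p)$. With this observation in place, the substitution and monotonicity inductions are entirely routine and go exactly as in \cite{CoPa12}; at no point do we need completely join-irreducible elements or perfectness of $\mathbb{H}_{\mathsf{RO}_{12}}$.
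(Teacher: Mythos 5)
Your proposal is correct and follows essentially the same route as the paper's proof: reinterpret $p$ as $V(\theta)$, use substitution to transfer the premises, and use the positivity/negativity (polarity) hypotheses for the converse direction, exactly as the paper does (the paper merely argues WLOG for $n=m=1$ and leaves the substitution and monotonicity lemmas implicit). Your explicit remark that $V(\theta)$ must be the $\lor_{\mathsf{RO}_{12}}$-join and that $V^{p}_{V(\theta)}$ is a legitimate $\mathsf{RO}_{12}$-valued valuation is a worthwhile clarification, but it does not change the argument.
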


\begin{proof}
Without loss of generality we assume that $n=1$ and $m=1$. Then it suffices to show the following equivalence:

\begin{itemize}
\item $\mathbb{F}\Vdash\theta\leq p\ \&\ \eta\leq\iota\ \Rightarrow\ \phi\leq\psi$;
\item $\mathbb{F}\Vdash\eta(\theta/p)\leq\iota(\theta/p)\ \Rightarrow\ \phi(\theta/p)\leq\psi(\theta/p)$.
\end{itemize}

$\Downarrow$: Assume that $\mathbb{F}\Vdash\theta\leq p\ \&\ \eta\leq\iota\ \Rightarrow\ \phi\leq\psi.$ Then for any valuation $V$ on $\mathbb{F}$, if $\mathbb{F},V\Vdash\eta(\theta/p)\leq\iota(\theta/p),$ then take $V'=V^{p}_{V(\theta)}$, then since $p$ does not occur in $\theta$, we have $V'(\theta)=V(\theta)=V'(p),$ therefore $V(\eta(\theta/p))=V'(\eta(\theta/p))=V'(\eta),$ similarly $V(\iota(\theta/p))=V'(\iota),$ so from $\mathbb{F},V\Vdash\eta(\theta/p)\leq\iota(\theta/p)$ we get $V'(\eta)\subseteq V'(\iota).$ Therefore $\mathbb{F},V'\Vdash\phi\leq\psi.$ Therefore $V'(\phi)\subseteq V'(\psi)$. Similar to $\eta$ and $\iota$ we get $V'(\phi)=V(\phi(\theta/p))$ and $V'(\psi)=V(\psi(\theta/p)),$ so $\mathbb{F},V\Vdash\phi(\theta/p)\leq\psi(\theta/p).$ By the arbitrariness of $V$ we get $\mathbb{F}\Vdash\eta(\theta/p)\leq\iota(\theta/p)\ \Rightarrow\ \phi(\theta/p)\leq\psi(\theta/p).$\\

$\Uparrow$: Assume $\mathbb{F}\Vdash\eta(\theta/p)\leq\iota(\theta/p)\ \Rightarrow\ \phi(\theta/p)\leq\psi(\theta/p).$ Then for any valuation $V$ on $\mathbb{F}$, if $\mathbb{F},V\Vdash\theta\leq p\ \&\ \eta\leq\iota,$ then $V(\theta)\subseteq V(p),V(\eta)\subseteq V(\iota).$ Therefore by the polarity of $p$ in $\eta(\theta/p)$ and $\iota(\theta/p)$ we have that $V(\eta(\theta/p))\subseteq V(\eta)\subseteq V(\iota)\subseteq V(\iota(\theta/p)).$ So from $\mathbb{F}\Vdash\eta(\theta/p)\leq\iota(\theta/p)\ \Rightarrow\ \phi(\theta/p)\leq\psi(\theta/p)$ we get $V(\phi)\subseteq V(\phi(\theta/p))\subseteq V(\psi(\theta/p))\subseteq V(\psi),$ so $\mathbb{F},V\Vdash\phi\leq\psi$. Therefore by the arbitrariness of $V$ we get $\mathbb{F}\Vdash\theta\leq p\ \&\ \eta\leq\iota\ \Rightarrow\ \phi\leq\psi.$
\end{proof}

\section{Conclusions}\label{aSec:Discussion}

In this paper, we study the correspondence theory of intuitionistic modal logic in modal Fairtlough-Mendler semantics, which is the intuitionistic modal counterpart of possibility semantics. Our study can be regarded as the study of correspondence theory for complete Heyting algebras with complete operators which are not necessarily perfect. 

We proposed the general principles to choose the interpretations of the expanded modal language used in the algorithm $\mathsf{ALBA}$, and apply it in the current setting. Therefore, we interpret the nominals in our setting as the refined regular open closures of singletons, which is join-dense in $\mathbb{H}_{\mathsf{RO}_{12}}$ and can be expressed in a first-order way.

For future directions, we mention the following:

\begin{itemize}
\item In the present paper, we use a join-dense set of $\mathbb{H}_{\mathsf{RO}_{12}}$ to interpret the nominals. Whether a meet-dense set of $\mathbb{H}_{\mathsf{RO}_{12}}$ which can be expressed in a first-order way can be found is a question related to whether we can expand the fragment of inductive formulas here, which is also related to finding correspondents of the $\mathsf{KC}$ formula $\neg p\lor\neg\neg p$ and the $\mathsf{LC}$ formula $(p\to q)\lor(q\to p)$ in the present semantic setting.
\item In the present paper, we only have $\Box$ as the modality, since it interacts well with arbitrary intersection. A future direction is to find a way to incorporate $\Diamond$ into the picture.
\item In the present paper, we consider complete Heyting algebra expansions, therefore it is natural to also consider the correspondence theory of complete lattice expansions which are not necessarily perfect.
\item In the present paper, we do not consider the canonicity and completeness concepts related to the intuitionistic modal version of possibility semantics, which is a future direction.
\end{itemize}

\paragraph{Acknowledgement} The research of the author is supported by the Taishan Young Scholars Program of the Government of Shandong Province, China (No.tsqn201909151).

\bibliographystyle{abbrv}
\bibliography{Bi_Relational}

\end{document}